 
   
\documentclass[12pt]{amsart}
\usepackage{latexsym}
\usepackage{amssymb} 
\usepackage{mathrsfs}
\usepackage{amsmath}
\usepackage{latexsym}
\usepackage{delarray}
\usepackage{amssymb,amsmath,amsfonts,amsthm,mathrsfs}

\setlength{\textwidth}{15.2cm}
\setlength{\textheight}{22.7cm}
\setlength{\topmargin}{0mm}
\setlength{\oddsidemargin}{3mm}
\setlength{\evensidemargin}{3mm}
\setlength{\footskip}{1cm}

\usepackage{hyperref}
\renewcommand\eqref[1]{(\ref{#1})} 


\hyphenation{ope-rators}
 \newtheorem{thm}{Theorem}[section]
 \newtheorem{cor}[thm]{Corollary}
 \newtheorem{lem}[thm]{Lemma}
 \newtheorem{prop}[thm]{Proposition}
 \theoremstyle{definition}
 
 \theoremstyle{remark}
 \newtheorem{rem}[thm]{Remark}
 
 \numberwithin{equation}{section}
\newcommand{\half}{\frac{1}{2}}
\newcommand{\twothird}{\frac{2}{3}}

\newcommand{\ene}{\mathbb{N}}

\newcommand{\ce}{\mathbb{C}}

\newcommand{\zn}{\mathbb{Z}^n}
\newcommand{\tn}{\mathbb{T}^n}

\newcommand{\bi}{\begin{itemize}}
\newcommand{\ei}{\end{itemize}}
\newcommand{\be}{\begin{enumerate}}
\newcommand{\ee}{\end{enumerate}}
\newcommand{\beq}{\begin{equation}}
\newcommand{\eq}{\end{equation}}

\newcommand{\lap}{\mathcal{L}_G}
\newcommand{\cdxi}{\ce^{d_{\xi}\times d_{\xi}}}
\newcommand{\sdxi}{\ce^{d_{\xi}}}

\newcommand{\eigen}{\lambda^2_{[\xi]}}


\def\p#1{{\left({#1}\right)}}

\def\jp#1{{\left\langle{#1}\right\rangle}}

\def\Op{{{\rm Op}}}

\DeclareMathOperator{\Tr}{Tr}

\def\Gh{{\widehat{G}}}

\def\dxi{{d_\xi}}

\def\HS{{\mathtt{HS}}}
\def\Rn{{{\mathbb R}^n}}
\def\Tn{{{\mathbb T}^n}}
\def\Zn{{{\mathbb Z}^n}}
\def\T{{{\mathbb T}^1}}
\def\N{{{\mathbb N}}}
\def\C{{{\mathbb C}}}
\def\SU2{{{\rm SU(2)}}}
\def\SO3{{{\rm SO(3)}}}
\def\lapsu2{{{\mathcal L}_\SU2}}

\def\Op{\text{\rm Op}}


 \begin{document}

%
\title[$L^p$-Nuclearity on compact Lie groups]
 {$L^p$-Nuclearity,  traces, and Grothendieck-Lidskii 
 formula on compact Lie groups}

\author[Julio Delgado]{Julio Delgado}

\address{%
Department of Mathematics\\
Imperial College London\\
180 Queen's Gate, London SW7 2AZ\\
United Kingdom
}

\email{j.delgado@imperial.ac.uk}

\thanks{The first author was supported by Marie Curie IIF 301599. The second author was supported by EPSRC Leadership Fellowship EP/G007233/1. }
\author[Michael Ruzhansky]{Michael Ruzhansky}

\address{%
Department of Mathematics\\
Imperial College London\\
180 Queen's Gate, London SW7 2AZ\\
United Kingdom
}

\email{m.ruzhansky@imperial.ac.uk}

\subjclass{Primary 35S05; Secondary 43A75, 22E30, 47B06.}

\keywords{Compact Lie groups, pseudo-differential operators, eigenvalues, $r$-nuclear operators, trace formula, Schatten classes. }

\date{\today}
\begin{abstract}
Given a compact Lie group $G$, in this paper we give symbolic criteria for operators to be nuclear and $r$-nuclear on $L^p(G)$-spaces,  with applications to distribution of eigenvalues and trace formulae. Since criteria in terms of kernels are often not effective in view of
Carleman's example, in this paper we adopt the symbolic point of view. The criteria here are given in terms of the concept of matrix symbols defined on the non-commutative analogue of the phase space $G\times\widehat{G}$, where $\widehat{G}$ is the unitary dual of $G$. 
\end{abstract}

\maketitle
\section{Introduction}

Let $G$ be a compact Lie group. In this paper we address the following problems:
\begin{itemize}
\item to find criteria for operators to be nuclear on $L^{p}(G)$,  for $1\leq p<\infty$;
\item since in the Banach spaces, due to Grothendieck's work \cite{gro:me}, 
we know that in order to have the operator trace to agree with the spectral trace,
the notion of nuclearity is not sufficient, to find criteria for the $r$-nuclearity 
($0<r\leq 1$) and 
to apply this to derive information on the spectral behaviour and on the traces of
operators on $L^p(G)$.
\end{itemize}
Our analysis will be based on the global 
quantization recently developed in \cite{rt:book} and \cite{rt:groups} as a noncommutative analogue 
of the Kohn-Nirenberg quantization of operators on $\Rn$. 

In general, for trace class operators in Hilbert spaces, the trace of an operator given by integration of its
integral kernel over the diagonal is equal to the sum of its eigenvalues. However, this property fails
in Banach spaces. The notion of $r$-nuclear operators becomes useful, and Grothendieck
\cite{gro:me} proved that $\frac 23$-nuclear operators in this scale satisfy the Lidskii trace formula
on $L^p$-spaces.
The question of finding good criteria for ensuring the $r$-nuclearity of operators arises but this has
to be formulated in terms different from those on Hilbert  spaces and has to take into account the impossibility of certain kernel
formulations in view of Carleman's example
\cite{car:ex} recalled below. 

In order to get an efficient criterion for the $r$-nuclearity, the application of the notion of a matrix symbol of an operator on a compact Lie will be instrumental. We also give several further applications. A special feature of our criteria is that we do not assume
any regularity condition on the
symbols, which shows a certain advantage in comparison with the traditional
Kohn-Nirenberg quantization in the manifold setting. Here we will completely drop regularity assumption on the symbol as a consequence of the technique of noncommutative quantization that we are using. While Grothendieck's result yields
the same index $2/3$ for all $L^p$ spaces, we relate the index $r$ of the $r$-nuclear operators with 
the index $p$ of $L^p$-spaces in which the trace formula holds. 
Nuclearity criteria for operators on $L^2$ with smooth symbols in H{\"o}rmander classes have been analysed,
see e.g. Shubin \cite[Section 27]{shubin:r}. The problem of finding criteria for
Schatten classes in terms of symbols with lower regularity has been of interest in the last years, see e.g.
\cite{Toft:Schatten-AGAG-2006,Toft:Schatten-modulation-2008,
Buzano-Toft:Schatten-Weyl-JFA-2010}. 

Symbolic criteria for the
$L^{p}$-boundedness of operators on compact Lie groups, the 
Mikhlin-H\"ormander multiplier theorem and its extension to non-invariant
operators for $1<p<\infty$, are presented in
\cite{Ruzhansky-Wirth:Lp-FAA}.

To formulate the notions more precisely, 
let $E$ and $F$ be two Banach spaces and let $0<r\leq 1$. A linear operator $T$
from $E$ to $F$ is called {\em r-nuclear} if there exist sequences
$(x_{n}')\mbox{ in } E' $ and $(y_n) \mbox{ in } F$ so that
\begin{equation}\label{EQ:T-reps}
Tx= \sum_{n=1}^\infty \left<x,x_{n}'\right>y_n 
\end{equation}
and 
\begin{equation}\label{rn}
\sum_{n=1}^\infty \|x_{n}'\|^{r}_{E'}\|y_n\|^{r}_{F} < \infty.
\end{equation}
The class of $r$-nuclear operators is usually endowed with the pseudo-norm 
\[n_r(T)=\inf\{\sum\limits_{n=1}^{\infty}\|x_n'\|_{E'}^r\| y_n\|_{F}^r \}^{\frac{1}{r}},\]
where the infimum is taken over the representations \eqref{EQ:T-reps}
of $T$ such that (\ref{rn}) holds. 
When $r=1$ we obtain the ideal of {\em nuclear operators} and $n_1(\cdot)$ 
is a norm. In this case the definition above agrees with the concept of trace class operators in
the setting of Hilbert spaces ($E=F=H$). 
 
Since we are also interested in the distribution of eigenvalues we shall consider the case $E=F$ and 
the notion of the trace. In order to ensure the existence of a good definition of the trace on the ideal of 
nuclear operators $\mathfrak{N}(E)$ one is led to consider the Banach spaces $E$ enjoying the 
so-called {\em approximation property} (cf. \cite{piet:book}, \cite{df:tensor}). 
It is well known that the spaces $L^p({\Omega},{\mathcal{M}},\mu)$ satisfy the approximation property 
for any measure $\mu$ and $1\leq p\leq\infty $ (cf. \cite[Lemma 19.3.5]{piet:book1}). 
Thus, a Banach space $E$ is said to have the {\em approximation property} if for every compact subset 
$K$ of $E$ and every $\epsilon >0$ there exists a finite rank bounded operator $B$ on $E$ such that
we have
 \[\|x-Bx\|_E<\epsilon \mbox{ for all }x\in K.\]     
 On such spaces, if  
$T:E\rightarrow E$ is nuclear, the trace is well-defined by
$$\Tr(T)=\sum\limits_{n=1}^{\infty}x_{n}'(y_n)  ,$$
where $T=\sum\limits_{n=1}^{\infty}x_{n}'\otimes y_n$ is a
representation of $T$ as in \eqref{EQ:T-reps}. It can be shown that this definition is
independent of the choice of the representation.
 
In the setting of Hilbert spaces the class of $r$-nuclear operators agrees with the Schatten-von 
Neumann ideal of order $r$, a result due to R. Oloff (cf. \cite{Oloff:pnorm}). 
When $r=\frac23$, Grothendieck proved (cf. {\cite{gro:me}) that the trace in Banach spaces
agrees with the sum of all the 
eigenvalues with multiplicities counted. In Hilbert spaces this holds for nuclear (i.e. trace class) operators, 
the result which is known as the Lidskii formula (cf. {\cite{li:formula}). 
It has been proven by A. Pietsch \cite{piet:r} that if $r>1$ the class of operators having decomposition 
(\ref{EQ:T-reps}) and satisfying \eqref{rn} is essentially
reduced to the null operator. The question about the sharpness of the index $r=\twothird$ for trace
formulae in the case of 
$L^p$-spaces has been recently considered by 
Reinov and Laif \cite{Reinov}. Being in the class of $r$-nuclear operators 
can be used to deduce properties concerning the asymptotic behaviour of the corresponding operators. The statement relating Grothendieck's r-nuclearity
result to the Lidskii formula in $L^p$ spaces is known as a 
Grothendieck-Lidskii formula (see e.g. \cite{Reinov}) and we give its
variant on compact Lie groups in Corollary \ref{lem20al} for $0<r\leq 2/3$
and in Corollary \ref{main13ab} for $0<r\leq 1$ with 
$\frac1r=1+\left|\frac12-\frac1p\right|$.
The $r$-nuclear operators are sometimes known as $p$-nuclear operators, but here we will reserve the index $p$ 
to indicate the $L^p$-spaces. A description of the current state of the art of the general theory of $p$-nuclear 
operators has recently appeared in Hinrichs and Pietsch \cite{hi:pn}.  

Among other things, in this paper
we establish sufficient conditions on the 
matrix-valued symbol of an operator in order to ensure the $r$-nuclearity in $L^p$-spaces. 
The nuclearity of pseudo-differential operators on the circle $\T$ has been
recently analysed in \cite{dw:trace} but the situation in the present paper is much more
subtle because of the necessarily appearing multiplicities of the eigenvalues of the Laplacian on
the noncommutative compact Lie groups; moreover, due to the commutativity of the torus, the
symbol there is scalar and hence all of its ``matrix''-norms are uniformly equivalent which is not
the case if the group $G$ is noncommutative.

We shall now briefly recall a classical result of 
Carleman (\cite{car:ex}) which will be helpful to clarify the significance of our symbolic criteria. In 1916 Torsten Carleman 
 constructed a periodic continuous function 
$\varkappa(x)=\sum\limits_{n=-\infty}^{\infty}c_ne^{2\pi inx}$, i.e. a continuous function on the 
commutative Lie group $\T$, for which the Fourier coefficients $c_n$ satisfy
$$\sum\limits_{n=-\infty}^{\infty} |c_n|^r=\infty\qquad \textrm{ for any } r<2.$$     
Now, considering the normal operator 
\begin{equation}\label{EQ:Carleman}
Tf=f*\varkappa
\end{equation}
acting on $L^2(\T)$ 
one obtains that the sequence $(c_n)_n$ forms a complete system of eigenvalues of 
this operator corresponding to the complete orthonormal system $\phi_n(x)=e^{2\pi inx}$, 
$T\phi_n=c_n\phi_n$. The system $\phi_n$ is also complete for $T^*$, $T^*\phi_n=\overline{c_n}\phi_n$, 
the singular values of $T$ are given by $ s_n(T)=|c_n|$ and
hence
$$\sum\limits_{n=-\infty}^{\infty}s_n(T)^r=\infty ,$$ 
 for $r<2$. Hence, the operator $T$ is not nuclear. Moreover, due to the aforementioned 
 Oloff's result the operator $T$ is not $r$-nuclear for $0<r\leq 1$.  
 However, the continuous integral
 kernel $k(x,y)=\varkappa (x-y)$ satisfies any kind of integral condition of the form 
 $ \int\int |k(x,y)|^s dxdy<\infty$ due to the boundedness of $k$.  This shows that it is impossible to formulate a sufficient condition of this type for the kernel ensuring nuclearity on the torus $\T$. 
 
 In this work we will establish conditions imposed on symbols 
 instead of kernels ensuring the $r$-nuclearity of the corresponding operators. 
 The criteria that we will obtain in the general case for the nuclearity from
 $L^{p_1}(G)$ to $L^{p_2}(G)$ will depend on whether $p_1\leq 2$ or $p_1\geq 2$.
 For $p_1=p_2$ we will apply this to the question of the convergence of the
 series of eigenvalues of operators and to the validity of the Lidskii formula.
 We give examples applying our results to the heat kernel on general compact Lie
 groups (Subsection \ref{SEC:heat})
 as well as to the Laplacian and the sub-Laplacian on $\SU2\simeq\mathbb S^{3}$ 
 and on $\SO3$ (Subsection \ref{SEC:SU2}).

In Section \ref{SEC:Prelim} we discuss and formulate the known criteria for nuclearity as
well as make a short introduction to the global quantization on compact Lie groups.
In Section \ref{SEC:nuclearity-p} we move to the setting of $L^p$-spaces and
formulate our criteria for the $r$-nuclearity. There are different possibilities
of how to impose conditions on the symbol. We will discuss both the cases of invariant and
non-invariant operators, and give examples of our results on the tori, on the group
$\SU2$ and on $\SO3$. In Section \ref{SEC:nuclearity-pr} we give  applications to summability of eigenvalues, trace formulae and the Lidskii theorem. 
In particular, in Subsection \ref{SEC:heat}
we give the example of the trace kernel and its trace. 

The authors would like to thank Jens Wirth for discussions and remarks.

\section{Preliminaries}  
\label{SEC:Prelim}

In this section we recall some basic facts about the concepts of nuclear and $r$-nuclear operators, 
and the notion of the trace on Banach spaces. In particular, we consider the trace of 
nuclear operators on $L^p(\mu)$. The fact that these spaces satisfy the approximation property 
is a classical result (cf. \cite{gro:me}, \cite{piet:book1}). We refer the reader to \cite{piet:book1} 
and to \cite[Chapter 4.2]{piet:book} for the general theory of traces on operator ideals and the notation 
used in this section, see also \cite{goh:trace} for an exposition on the distribution of the eigenvalues. 
For the theory of pseudo-differential operators on compact Lie groups the we refer to 
\cite{rt:book} and \cite{rt:groups}.  
 
In the case of $L^p$-spaces we first record the following characterisation of nuclear operators 
(cf. \cite{del:tracetop}). In the statement below we shall consider $({\Omega}_1,{\mathcal{M}}_1,\mu_1 )$ and
$({\Omega}_2,{\mathcal{M}}_2,{\mu}_2)$ to be two $\sigma$-finite measure spaces. 

\begin{thm}\label{ch2} 
Let $1\leq p_1,p_2 <\infty$ and let $q_1$ be such that
$\frac{1}{p_1}+\frac{1}{q_1}=1$. 
 An operator $T:L^{p_1}({\mu}_1)\rightarrow L^{p_2}({\mu}_2)$ is nuclear if and only if 
 there exist sequences
 $(g_n)_n$ in $L^{p_2}({\mu}_2)$, and $(h_n)_n$ in $L^{q_1}(\mu_1)$ such that
  $\sum \limits_{n=1}^\infty \| g_n\|_{L^{p_2}}
 \|h_n\|_{L^{q_1}}<\infty$, and such that for all $f\in L^{p_1}(\mu_1)$ we have
$$Tf(x)=\int\left(\sum\limits_{n=1}^{\infty}
  g_n(x)h_n(y)\right)f(y)d\mu_1(y), \quad \mbox{for  a.e }x.$$
\end{thm}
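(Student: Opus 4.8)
The plan is to read off the kernel representation directly from the abstract definition of nuclearity, using the duality $(L^{p_1}(\mu_1))'=L^{q_1}(\mu_1)$, which holds for $1\leq p_1<\infty$ on $\sigma$-finite spaces (here $\sigma$-finiteness is only needed to cover the endpoint $p_1=1$, where $q_1=\infty$). First I would build the dictionary between the two formulations. If $T$ is nuclear, write $Tf=\sum_{n=1}^\infty\langle f,x_n'\rangle y_n$ with $\sum_n\|x_n'\|_{(L^{p_1})'}\|y_n\|_{L^{p_2}}<\infty$. Each functional $x_n'$ is represented by a unique $h_n\in L^{q_1}(\mu_1)$ with $\langle f,x_n'\rangle=\int f h_n\,d\mu_1$ and $\|x_n'\|_{(L^{p_1})'}=\|h_n\|_{L^{q_1}}$; setting $g_n=y_n$ turns the nuclearity estimate into exactly $\sum_n\|g_n\|_{L^{p_2}}\|h_n\|_{L^{q_1}}<\infty$.

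The technical heart is to justify the interchange of summation and integration needed to pass from $Tf=\sum_n\langle f,x_n'\rangle g_n$ to the claimed kernel formula. To this end I would introduce the majorant $G(x)=\sum_n|g_n(x)|\,\|h_n\|_{L^{q_1}}$ and invoke Minkowski's inequality in $L^{p_2}$ to obtain
\[
\|G\|_{L^{p_2}}\leq\sum_{n=1}^\infty\|g_n\|_{L^{p_2}}\|h_n\|_{L^{q_1}}<\infty,
\]
so that $G\in L^{p_2}(\mu_2)$ and hence $G(x)<\infty$ for a.e.\ $x$. For every such $x$ and any $f\in L^{p_1}(\mu_1)$, H\"older's inequality gives $\sum_n\int|g_n(x)h_n(y)f(y)|\,d\mu_1(y)\leq G(x)\|f\|_{L^{p_1}}<\infty$, which by Tonelli/Fubini (for the product of counting measure on $\mathbb N$ with $\mu_1$) legitimises the exchange and yields $\int(\sum_n g_n(x)h_n(y))f(y)\,d\mu_1(y)=\sum_n\langle f,x_n'\rangle g_n(x)$ pointwise a.e. Since the nuclear series converges to $Tf$ in $L^{p_2}$, a subsequence of its partial sums converges a.e.\ to $Tf$, while the same partial sums converge a.e.\ to the pointwise series above; uniqueness of a.e.\ limits then identifies $Tf(x)$ with the integral against the kernel, establishing the forward implication.

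For the converse I would simply run these identities backwards: given $(g_n)$ in $L^{p_2}(\mu_2)$ and $(h_n)$ in $L^{q_1}(\mu_1)$ with $\sum_n\|g_n\|_{L^{p_2}}\|h_n\|_{L^{q_1}}<\infty$, define $x_n'\in(L^{p_1})'$ by $h_n$ and $y_n=g_n$; the same Minkowski and Fubini computation shows $\int(\sum_n g_n(x)h_n(y))f(y)\,d\mu_1(y)=\sum_n\langle f,x_n'\rangle g_n(x)$, which by hypothesis equals $Tf$, so $T$ admits a representation \eqref{EQ:T-reps} with $\sum_n\|x_n'\|\|y_n\|<\infty$ and is therefore nuclear. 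The main obstacle throughout is exactly the legitimacy of swapping the infinite sum past the integral; everything reduces to the Minkowski bound placing $G$ in $L^{p_2}$, after which the only remaining subtlety is reconciling the $L^{p_2}$ limit of the nuclear series with its pointwise a.e.\ value.
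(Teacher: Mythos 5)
Your argument is correct and complete: the duality $(L^{p_1}(\mu_1))'=L^{q_1}(\mu_1)$ (with $\sigma$-finiteness covering the case $p_1=1$), the majorant $G(x)=\sum_n|g_n(x)|\,\|h_n\|_{L^{q_1}}$ controlled by the triangle inequality in $L^{p_2}$, the Fubini--Tonelli interchange, and the reconciliation of the $L^{p_2}$-limit with the a.e.\ pointwise limit via a subsequence are exactly the points that need to be addressed, and you address each one. Note that the paper itself does not prove Theorem \ref{ch2}; it only records it with a citation to \cite{del:tracetop}, so there is no in-paper proof to compare against --- your proof is the standard argument underlying that reference.
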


\begin{rem}\label{remrn}
 An analogue of the characterisation above holds for $r$-nuclear operators replacing the terms $\| g_n\|_{L^{p_2}}
 \|h_n\|_{L^{q_1}}$ by $\| g_n\|_{L^{p_2}}^r
 \|h_n\|_{L^{q_1}}^r$ in the sum. 
\end{rem}

A distribution of the eigenvalues for $r$-nuclear operators can be obtained from  the next theorem 
relating the eigenvalues and the class of $r$-nuclear operators 
(cf. \cite[Chap. II, p. 16]{gro:me}, \cite[Chap. 5, Theorem 4.2]{goh:trace}):

\begin{thm}\label{THM:evs}
Let $E$ be a Banach space which has the approximation property. Let $T$ be an $r$-nuclear operator from $E$ into $E$
 for some $0<r\leq 1$. Then
\[\sum\limits_{n=1}^{\infty}|\lambda_n(T)|^s\leq n_r^s(T),\qquad \frac{1}{s}=\frac{1}{r}-\half,\]
where $\lambda_n(T)$ denote the eigenvalues of $T$ with multiplicities counted.
\end{thm}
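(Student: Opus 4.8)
The plan is to reduce the eigenvalue estimate to a Schatten-class bound on a Hilbert space, by factoring $T$ through $\ell^2$, and then to invoke Weyl's inequality. Fix a representation $Tx=\sum_{n=1}^\infty\langle x,x_n'\rangle y_n$ as in \eqref{EQ:T-reps} with $\sum_n(\|x_n'\|_{E'}\|y_n\|_E)^r<\infty$. After rescaling each pair $(x_n',y_n)$ I may assume $\|x_n'\|_{E'}=\|y_n\|_E=\alpha_n^{1/2}$, where $\alpha_n:=\|x_n'\|_{E'}\|y_n\|_E$, and write $x_n'=\alpha_n^{1/2}\phi_n$, $y_n=\alpha_n^{1/2}\psi_n$ with $\phi_n,\psi_n$ of unit norm. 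Since $0<r\le 1$ forces $(\alpha_n)\in\ell^1$, the maps $A\colon E\to\ell^2$, $Ax:=(\alpha_n^{1/2}\langle x,\phi_n\rangle)_n$, and $B\colon\ell^2\to E$, $B\xi:=\sum_n\alpha_n^{1/2}\xi_n\psi_n$, are bounded and satisfy $T=BA$. The approximation property of $E$ is what guarantees that the trace and the eigenvalue sequence of the nuclear operator $T$ are well defined and may be obtained from finite-rank truncations of this representation with uniform bounds, so it suffices to prove the estimate for such truncations and pass to the limit in $n_r$.

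Next I would use the principle of related operators: since $T=BA$, the operator $\widetilde T:=AB\colon\ell^2\to\ell^2$ has the same nonzero eigenvalues as $T$ with the same algebraic multiplicities, whence $\sum_n|\lambda_n(T)|^s=\sum_n|\lambda_n(\widetilde T)|^s$. A direct computation identifies $\widetilde T$ with the matrix operator on $\ell^2$ having entries $\widetilde T_{nm}=\alpha_n^{1/2}\alpha_m^{1/2}\langle\psi_m,\phi_n\rangle$, where $|\langle\psi_m,\phi_n\rangle|\le 1$. Everything then rests on the sharp Schatten estimate
\[
\|\widetilde T\|_{S_s}\le\Big(\sum_n\alpha_n^r\Big)^{1/r},\qquad \frac1s=\frac1r-\half ,
\]
with $S_s$ the Schatten ideal on $\ell^2$. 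Granting this, Weyl's inequality in the Hilbert space $\ell^2$, namely $\sum_n|\lambda_n(\widetilde T)|^s\le\sum_n s_n(\widetilde T)^s=\|\widetilde T\|_{S_s}^s$ (valid for every $0<s<\infty$), yields $\sum_n|\lambda_n(T)|^s\le(\sum_n\alpha_n^r)^{s/r}$. Taking the infimum over all representations \eqref{EQ:T-reps} then gives $\sum_n|\lambda_n(T)|^s\le n_r(T)^s$, which is the assertion.

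The main obstacle is precisely the sharp Schatten bound with the exponent $\tfrac1s=\tfrac1r-\half$. Crude estimates are insufficient: controlling the operator norm of the tail of $\widetilde T$ by its Hilbert–Schmidt norm yields summability only for $s>r/(1-r)$, which is strictly weaker, while merely using $(\alpha_n)\in\ell^1$ gives Hilbert–Schmidt membership and hence only $s=2$. The extra gain of $\half$ must come from the orthogonality encoded in the Gram matrix $(\langle\psi_m,\phi_n\rangle)_{n,m}$, not from entrywise bounds. The natural way to capture it is to write $\widetilde T=DQD$ with $D=\mathrm{diag}(\alpha_n^{1/2})$ and $Q=(\langle\psi_m,\phi_n\rangle)$, where $Q$ has norm $\le 1$ as a map $\ell^1\to\ell^\infty$ because it factors through the unit balls of $E$ and $E'$, and to read the two diagonal factors as maps $\ell^2\to\ell^1$ and $\ell^\infty\to\ell^2$ whose approximation numbers are governed by the decreasing rearrangement of $(\alpha_n^{1/2})\in\ell^{2r}$. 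The interplay between this Schatten-type membership of the diagonals and the Hilbert-space structure of the middle factor is exactly what converts the nuclear index $r$ into the eigenvalue index $s$, and is the origin of the $\half$; this is the heart of Grothendieck's argument.

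An alternative, and historically the original, route proceeds via the Fredholm determinant $z\mapsto\det(I+zT)$, which for an $r$-nuclear $T$ is entire of order at most $s$ with type controlled by $n_r(T)^s$; since the nonzero eigenvalues of $T$ are the reciprocals of its zeros, Hadamard's factorization theorem (equivalently Jensen's formula) converts the order bound into the convergence exponent $s$, and hence into $\sum_n|\lambda_n(T)|^s\le n_r(T)^s$. Either way, the quantitative form of this delicate step is exactly the content of the results of Grothendieck \cite{gro:me} and of Gohberg–Goldberg–Krupnik \cite{goh:trace} quoted above, and I would ultimately appeal to their estimates to secure the constant.
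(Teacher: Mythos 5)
The paper does not actually prove this theorem: it is stated as a known result and attributed to Grothendieck \cite{gro:me} and Gohberg--Goldberg--Krupnik \cite{goh:trace} (see also \cite[Theorem 3.8.6]{piet:book}), so there is no internal argument to compare yours against. Your reduction is the standard one and is correct as far as it goes: the normalisation $\|x_n'\|_{E'}=\|y_n\|_E=\alpha_n^{1/2}$, the factorisation $T=BA$ through $\ell^2$, the principle of related operators identifying the nonzero eigenvalues of $T$ with those of $AB$ (with multiplicities), and Weyl's inequality $\sum_n|\lambda_n(AB)|^s\le\|AB\|_{S_s}^s$ are all legitimate, and your Hadamard-matrix extremal correctly confirms that $\frac1s=\frac1r-\frac12$ is the right exponent with constant $1$. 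You have also correctly diagnosed where the difficulty sits: everything is concentrated in the Schatten bound $\|DQD\|_{S_s}\le\bigl(\sum_n\alpha_n^r\bigr)^{1/r}$ for $D=\mathrm{diag}(\alpha_n^{1/2})$ and $Q$ a matrix with entries of modulus at most $1$, which, as you note, cannot be extracted from entrywise or Hilbert--Schmidt estimates. But be aware that your write-up does not prove this bound; it imports it from the very sources the paper cites, so logically your proposal has the same status as the paper's treatment (a correct reduction plus a citation) rather than being a self-contained proof. If you want to close that step yourself, the standard route is to factor $DQD$ as $\ell^2\to\ell^1\to\ell^\infty\to\ell^2$, use that $Q$ has norm at most $1$ from $\ell^1$ to $\ell^\infty$, and control the approximation (or Weyl) numbers of the diagonal factors by the decreasing rearrangement of $(\alpha_n^{1/2})\in\ell^{2r}$, as in \cite[Chapter 2]{piet:book}; your alternative sketch via the order of the Fredholm determinant and Hadamard factorisation is indeed Grothendieck's original argument and is also viable.

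One small correction: the approximation property is not what makes the eigenvalue sequence well defined --- eigenvalues are intrinsic to the operator, and the related-operators argument above needs no approximation property (Pietsch states the eigenvalue estimate without it). The approximation property is needed so that a nuclear representation determines the operator uniquely, hence so that the trace in the surrounding results of the paper is well defined; it is harmless but slightly misleading to present it as the justification for passing to finite-rank truncations in the eigenvalue estimate.
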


\begin{rem} (i) Note that from $\frac{1}{s}=\frac{1}{r}-\half$ we obtain that $s=\frac{2r}{2-r}$ for $0<r\leq 1$. In 
particular, the function  
$s(r)=\frac{2r}{2-r}$ has the range $(0,2]$. It is clear that if $s>2$ the series on the left in Theorem \ref{THM:evs} 
also converges
 but the interesting situation is to find smaller values of $s$ ensuring such convergence.

(ii) Theorem \ref{THM:evs} was established by Grothendieck \cite{gro:me}, and later extended by e.g.
K\"onig (\cite[page 107]{ko:pn}) to the scale of Lorentz sequences spaces; see also 
\cite[Theorem 3.8.6]{piet:book}.

(iii) If $r=1$ we get $s=2$, a classical result by Grothendieck (cf. \cite{gro:me}) establishing the square 
summability of eigenvalues for nuclear operators. It is also known by Grothendieck that $s=2$ 
is the best possible exponent in this case.  
\end{rem}

Theorem \ref{THM:evs} will be applied jointly with our sufficient conditions for $r$-nuclearity, 
to obtain estimates on the asymptotic behaviour of the eigenvalues. 
From this point of view, the main goal of this paper becomes to find suitable criteria for
ensuring the $r$-nuclearity of an operator.

\medskip
Given a compact Lie group $G$, in this work we consider ${\Omega}_1={\Omega}_2=G$  and 
${\mathcal{M}}={\mathcal{M}}_1={\mathcal{M}}_2$, the Borel $\sigma$-algebra associated to the topology of 
the smooth manifold $G$, with $\mu=\mu_1=\mu_2$ the normalised Haar measure of $G$. 
Let $\widehat{G}$ denote the set of equivalence classes of continuous irreducible unitary 
representations of $G$. Since $G$ is compact, the set $\widehat{G}$ is discrete.  
For $[\xi]\in \widehat{G}$, by choosing a basis in the representation space of $\xi$, we can view 
$\xi$ as a matrix-valued function $\xi:G\rightarrow \ce^{d_{\xi}\times d_{\xi}}$, where 
$d_{\xi}$ is the dimension of the representation space of $\xi$. 
By the Peter-Weyl theorem the collection
$$
\left\{ \sqrt{d_\xi}\,\xi_{ij}: \; 1\leq i,j\leq d_\xi,\; [\xi]\in\Gh \right\}
$$
is an orthonormal basis of $L^2(G)$.
If $f\in L^1(G)$ we define its global Fourier transform at $\xi$ by 
\begin{equation}\label{EQ:FG}
\mathcal F_G f(\xi)\equiv \widehat{f}(\xi):=\int_{G}f(x)\xi(x)^*dx,
\end{equation}
where $dx$ is the normalised Haar measure on $G$. Thus, if $\xi$ is a matrix representation, 
we have $\widehat{f}(\xi)\in\ce^{d_{\xi}\times d_{\xi}} $. The Fourier inversion formula is a consequence
 of the Peter-Weyl theorem, so that we have
\beq \label{EQ:FGsum}
f(x)=\sum\limits_{[\xi]\in \widehat{G}}d_{\xi} \Tr(\xi(x)\widehat{f}(\xi)).
\eq
Given a sequence of matrices $a(\xi)\in\mathbb C^{d_\xi\times d_\xi}$, we can define
\begin{equation}\label{EQ:FGi}
(\mathcal F_G^{-1} a)(x):=\sum\limits_{[\xi]\in \widehat{G}}d_{\xi} \Tr(\xi(x) a(\xi)),
\end{equation}
 where the series can be interpreted distributionally or absolutely depending on the growth of 
(the Hilbert-Schmidt norms of) $a(\xi)$. For a further discussion we refer the reader to \cite{rt:book}.

For each $[\xi]\in \widehat{G}$, the matrix elements of $\xi$ are the eigenfunctions for the Laplacian $\mathcal{L}_G$ 
(or the Casimir element of the universal enveloping algebra), with the same eigenvalue which we denote by 
$-\lambda^2_{[\xi]}$, so that we have
\begin{equation}\label{EQ:Lap-lambda}
-\mathcal{L}_G\xi_{ij}(x)=\lambda^2_{[\xi]}\xi_{ij}(x)\qquad\textrm{ for all } 1\leq i,j\leq d_{\xi}.
\end{equation} 
The weight for measuring the decay or growth of Fourier coefficients in this setting is 
$\jp{\xi}:=(1+\lambda^2_{[\xi]})^{\half}$, the eigenvalues of the elliptic first-order pseudo-differential operator 
$(I-\mathcal{L}_G)^{\half}$.
The Parseval identity takes the form 
\begin{equation}\label{EQ:Parseval}
\|f\|_{L^2(G)}= \left(\sum\limits_{[\xi]\in \widehat{G}}d_{\xi}\|\widehat{f}(\xi)\|^2_{\HS}\right)^{1/2},\quad
\textrm{ where }
\|\widehat{f}(\xi)\|^2_{\HS}=\Tr(\widehat{f}(\xi)\widehat{f}(\xi)^*),
\end{equation}
which gives the norm on 
$\ell^2(\widehat{G})$. 

For a linear continuous operator $A$ from $C^{\infty}(G)$ to $\mathcal{D}'(G) $ 
we define  its {\em matrix-valued symbol} $\sigma_A(x,\xi)\in\cdxi$ by 
\begin{equation}\label{EQ:A-symbol}
\sigma_A(x,\xi):=\xi(x)^*(A\xi)(x)\in\cdxi,
\end{equation}
where $A\xi(x)\in \cdxi$ is understood as $(A\xi(x))_{ij}=(A\xi_{ij})(x)$, i.e. by 
applying $A$ to each component of the matrix $\xi(x)$.
Then one has (\cite{rt:book}, \cite{rt:groups}) the global quantization
\begin{equation}\label{EQ:A-quant}
Af(x)=\sum\limits_{[\xi]\in \widehat{G}}d_{\xi}\Tr(\xi(x)\sigma_A(x,\xi)\widehat{f}(\xi))
\end{equation}
in the sense of distributions, and the sum is independent of the choice of a representation $\xi$ from each 
equivalence class 
$[\xi]\in \widehat{G}$. If $A$ is a linear continuous operator from $C^{\infty}(G)$ to $C^{\infty}(G)$,
the series \eqref{EQ:A-quant} is absolutely convergent and can be interpreted in the pointwise
sense. We will also write $A=\Op(\sigma_A)$ for the operator $A$ given by
the formula \eqref{EQ:A-quant}. The symbol $\sigma_A$ can be interpreted as a matrix-valued
function on $G\times\widehat{G}$.
We refer to \cite{rt:book, rt:groups} for the consistent development of this quantization
and the corresponding symbolic calculus. If the operator $A$ is left-invariant then its symbol
$\sigma_A$ does not depend on $x$. We often call such operators simply invariant.

We now record simple inequalities on the norms of the representation coefficients which will be essential for
 the analysis of $r-$ nuclearity:

\begin{lem} \label{LEM:xiests}
Let $G$ be a compact Lie group and let $[\xi]\in\Gh$. Then for all
$1\leq i,j\leq d_\xi$ we have 
\begin{equation}\label{ineqA}
 \|\xi_{ij}\|_{L^{q}(G)} \leq\left\{
\begin{array}{rl}
d_{\xi}^{-\frac{1}{q}} ,& 2\leq q\leq \infty,\\
d_{\xi}^{-\frac{1}{2}} , & 1\leq q\leq 2,
\end{array} \right.
\end{equation}
where for $q=\infty$ we adopt the usual convention $d_{\xi}^{-\frac{1}{q}}=1$.
\end{lem}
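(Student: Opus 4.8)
The plan is to reduce both estimates to two elementary inputs: the exact $L^2$-norm coming from Peter--Weyl orthonormality, and a uniform pointwise bound coming from the unitarity of $\xi$. First I would record that, since the collection $\{\sqrt{d_\xi}\,\xi_{ij}\}$ is orthonormal in $L^2(G)$, we have $\|\xi_{ij}\|_{L^2(G)}=d_{\xi}^{-1/2}$ for every $[\xi]\in\Gh$ and all $1\leq i,j\leq d_\xi$. Next, using that $\xi(x)$ is a unitary matrix for every $x\in G$, its rows and columns have Euclidean norm one, so in particular $\sum_{k}|\xi_{ik}(x)|^2=1$; this forces $|\xi_{ij}(x)|\leq 1$ for all $x$, and hence $\|\xi_{ij}\|_{L^\infty(G)}\leq 1$, which is exactly the claimed bound $d_{\xi}^{-1/q}$ at the endpoint $q=\infty$.

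For the range $2\leq q\leq\infty$ I would interpolate between these two endpoints. Writing $|\xi_{ij}(x)|^q=|\xi_{ij}(x)|^{q-2}\,|\xi_{ij}(x)|^2$ and estimating the first factor by $\|\xi_{ij}\|_{L^\infty}^{q-2}\leq 1$, we obtain
$$\int_G|\xi_{ij}(x)|^q\,dx\leq \int_G|\xi_{ij}(x)|^2\,dx=\|\xi_{ij}\|_{L^2(G)}^2=d_{\xi}^{-1},$$
so that $\|\xi_{ij}\|_{L^q(G)}\leq d_{\xi}^{-1/q}$, as required. Equivalently, one can quote the standard log-convexity (Riesz--Thorin) inequality $\|\xi_{ij}\|_{L^q}\leq\|\xi_{ij}\|_{L^2}^{2/q}\|\xi_{ij}\|_{L^\infty}^{1-2/q}\leq(d_{\xi}^{-1/2})^{2/q}=d_{\xi}^{-1/q}$.

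For the range $1\leq q\leq 2$ I would instead use that the Haar measure is a probability measure, so that the $L^q(G)$-norms are monotone nondecreasing in $q$ by Jensen's (or Hölder's) inequality; therefore $\|\xi_{ij}\|_{L^q(G)}\leq\|\xi_{ij}\|_{L^2(G)}=d_{\xi}^{-1/2}$ for all $1\leq q\leq 2$, which is the second case of the stated bound.

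I do not expect any genuine obstacle here: the only point requiring care is recognizing that the uniform bound $|\xi_{ij}|\leq 1$ is available for free from unitarity of the representation, since it is this $L^\infty$ control---rather than any decay in $d_\xi$---that governs the high-exponent regime, while the low-exponent regime is controlled purely by the normalized-measure monotonicity.
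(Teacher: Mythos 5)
Your proposal is correct and follows essentially the same route as the paper: the $L^\infty$ bound from unitarity, the exact $L^2$-norm $d_\xi^{-1/2}$ from Peter--Weyl orthonormality, the interpolation $\|f\|_{L^q}\leq\|f\|_{L^\infty}^{(q-2)/q}\|f\|_{L^2}^{2/q}$ for $q\geq 2$, and H\"older/monotonicity of norms on the normalized Haar measure for $1\leq q\leq 2$. No substantive differences.
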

\begin{proof} 
If $q=\infty$, for any $y\in G$ we have $|\xi(y)_{ij}|\leq \|\xi(y)\|_{op}=1$ by the unitarity of representations in $\Gh$.
If $2\leq q<\infty$ we apply the inequality 
$$\|f\|_{L^{q}}\leq \|f\|_{L^{\infty}}^{\frac{q-2}{q}}\|f\|_{L^{2}}^{\frac{2}{q}}.$$
Using that $\sqrt{d_\xi}\ \xi_{ij}$ is an orthonormal set in $L^2(G)$, i.e. that
$\|\xi_{ij}\|_{L^2}=d_\xi^{-\half}$, and that we have just showed that $\|\xi_{ij}\|_{L^\infty}\leq 1$,
we obtain
$$
\|\xi_{ij}\|_{L^{q}(G)} \leq  \|\xi_{ij}\|_{L^{\infty}}^{\frac{q-2}{q}}\|\xi_{ij}\|_{L^{2}}^{\frac{2}{q}}
\leq \|\xi_{ij}\|_{L^{2}}^{\frac{2}{q}}
\leq d_{\xi}^{-\frac{1}{q}}.
$$
Finally, for $1\leq q\leq 2$, using H\"older's inequality, we get 
 \begin{align*}
\|\xi_{ij}\|_{L^{q}(G)}^{q} = & \int_{G}|\xi_{ij}(y)|^{q}dy\\
 \leq &  \left(\int_{G}1dy\right)^{1-\frac{q}{2}} \left(\int_{G}|\xi_{ij}(y)|^{q\frac{2}{q}}dy\right)^{\frac{q}{2}}\\
\leq & \|\xi_{ij}\|_{L^{2}(G)}^{{q}}
= d_{\xi}^{-\frac{q}{2}},
\end{align*}
where we have used that the Haar measure on $G$ is normalised.
\end{proof}

Our criteria will be formulated in terms of norms of the matrix-valued symbols. In order to justify the appearence of them, we recall that if $A\in\Psi^m(G)$ is a pseudo-differential operators in H\"ormander's class
$\Psi^m(G)$, i.e. if all of its localisations to $\Rn$ are pseudo-differential operators with
symbols in the class $S^m_{1,0}(\Rn)$, then the matrix-symbol of $A$ satisfies
$$\|\sigma_A(x,\xi)\|_{op}\leq C \jp{\xi}^m \qquad \textrm {for all } x\in G,\; [\xi]\in\Gh.$$
Here $\|\cdot\|_{op}$ denotes the operator norm of the matrix
multiplication by the matrix $\sigma_A(x,\xi).$
For this fact, see e.g. \cite[Lemma 10.9.1]{rt:book} or \cite{rt:groups}, and for the complete
characterisation of H\"ormander classes $\Psi^m(G)$ in terms of matrix-valued symbols
see also \cite{Ruzhansky-Turunen-Wirth:arxiv}. In particular, this motivates the
 employ of the operator norms of the matrix-valued symbols. However, since $\sigma_{A}$ 
is in general a matrix, other matrix norms become useful as well.

\section{$r$-Nuclearity on $L^p(G)$ and examples} 
\label{SEC:nuclearity-p}
In this and next sections we analyse the $r$-nuclearity and trace formulae in $L^p$-spaces.
 We recall that the case $r=1$ correspond to the class of nuclear operators. One of the features of the obtained criteria is that they require the integrability (in some $L^p$-space) of symbols
$\sigma_A(x,\xi)$ with respect to $x$ but do not assume any regularity of the symbol.

We start by proving the following sufficient condition for $r$-nuclearity of operators on $L^2(G)$ with symbols depending only on $\xi$. We note that the property
that the symbol depends only on $\xi$ means that the operator is left-invariant, that is, it commutes with the left translations on the group $G$.
\begin{thm}\label{main125} 
Let $G$ be a compact Lie group. 
 Let $A:L^{2}(G)\to L^{2}(G)$ be a linear
continuous operator with matrix-valued symbol $\sigma_A(\xi)$
depending only on $\xi.$ Then
 $A$ is $r$-nuclear provided that its symbol $\sigma_A$ satisfies 
\beq\sum_{[\xi]\in\Gh}d_{\xi} \|\sigma_A(\xi)\|_{S_r}^r<\infty.\label{chmult}\eq

Here $\|\sigma_A(\xi)\|_{S_r}=\left(\Tr (|\sigma_{A}(\xi)|^r)\right)^{1/r}$ is the Schatten-norm of order 
$r$ of the matrix $\sigma_A(\xi)\in\cdxi$.
\end{thm}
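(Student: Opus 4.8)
The plan is to reduce the statement to the membership of $A$ in the Schatten class $S_r(L^2(G))$ and then to invoke Oloff's theorem, recalled in the introduction, which identifies the $r$-nuclear operators on a Hilbert space with the Schatten--von Neumann ideal of order $r$. Thus it suffices to compute the singular values of $A$ and to verify that their $r$-th powers sum to the left-hand side of \eqref{chmult}. The whole difficulty is organisational: to exhibit the block structure of $A$ and, crucially, to track the correct multiplicities.

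The main step is to exploit left-invariance in order to block-diagonalise $A$ with respect to the Peter--Weyl decomposition $L^2(G)=\bigoplus_{[\xi]\in\Gh}\mathcal H_\xi$, where $\mathcal H_\xi=\mathrm{span}\{\xi_{ij}\}$. First I would apply the global quantization \eqref{EQ:A-quant} to a single matrix coefficient $\zeta_{kl}$ of a fixed representation $\zeta$. Using the Fourier transform \eqref{EQ:FG} together with the Peter--Weyl orthogonality relations, one finds that $\widehat{\zeta_{kl}}(\xi)$ vanishes unless $[\xi]=[\zeta]$ and that $\widehat{\zeta_{kl}}(\zeta)$ is a single rescaled matrix unit. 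Substituting this into \eqref{EQ:A-quant} and evaluating the trace, I expect the explicit formula
\[A\zeta_{kl}=\sum_{a=1}^{d_\zeta}(\sigma_A(\zeta))_{al}\,\zeta_{ka}.\]
This shows that $A$ preserves each ``row space'' $\mathrm{span}\{\zeta_{kl}:1\le l\le d_\zeta\}$ for fixed $k$, and that in the (rescaled, hence orthonormal) basis $\{\sqrt{d_\zeta}\,\zeta_{kl}\}_l$ the restriction of $A$ to this space is represented by the matrix $\sigma_A(\zeta)$. Since passing to $\sigma_A(\zeta)^{T}$ under a different index convention would not affect singular values, the conclusion below is robust.

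Consequently, in the orthonormal basis $\{\sqrt{d_\xi}\,\xi_{ij}\}$ of $L^2(G)$ furnished by the Peter--Weyl theorem, $A$ is block-diagonal, and for each $[\xi]\in\Gh$ the block $\sigma_A(\xi)$ occurs with multiplicity exactly $d_\xi$, once for each row index $k=1,\dots,d_\xi$. Hence the singular values of $A$ are precisely those of the matrices $\sigma_A(\xi)$, each counted $d_\xi$ times as $[\xi]$ ranges over $\Gh$, so that
\[\sum_{n}s_n(A)^r=\sum_{[\xi]\in\Gh}d_\xi\,\|\sigma_A(\xi)\|_{S_r}^r,\]
which is finite by hypothesis \eqref{chmult}. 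This places $A$ in $S_r(L^2(G))$, and Oloff's theorem then yields $r$-nuclearity.

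The delicate point, and the feature that genuinely distinguishes the noncommutative setting from the torus, is keeping track of the weight $d_\xi$: each irreducible class contributes $d_\xi$ identical copies of the symbol block rather than a single scalar, which is exactly the source of the factor $d_\xi$ in \eqref{chmult}. As an alternative to invoking Oloff, the same block structure furnishes a singular value decomposition $A=\sum_m s_m\langle\cdot,u_m\rangle v_m$ whose terms can be inserted directly into the kernel characterisation of Theorem \ref{ch2} and Remark \ref{remrn} (with $p_1=p_2=q_1=2$): taking $g_m=s_m v_m$ and $h_m=\overline{u_m}$ produces an explicit $r$-nuclear representation with $\sum_m\|g_m\|_{L^2}^r\|h_m\|_{L^2}^r=\sum_m s_m^r<\infty$.
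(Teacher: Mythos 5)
Your proof is correct, but it takes a genuinely different route from the paper's. The paper never passes through Schatten classes: it writes the kernel as $k(x,y)=\sum_{[\xi]\in\Gh}d_\xi\Tr(\xi(x)\sigma_A(\xi)\xi(y)^*)$, takes $g_{\xi,ij}(x)=d_\xi(\xi(x)\sigma_A(\xi))_{ij}$ and $h_{\xi,ij}(y)=\overline{\xi(y)}_{ij}$ as the tensor decomposition required by Theorem \ref{ch2} and Remark \ref{remrn}, computes $\|g_{\xi,ij}\|_{L^2}=\sqrt{d_\xi}\,|\sigma_A(\xi)|_{jj}$ and $\|h_{\xi,ij}\|_{L^2}=d_\xi^{-1/2}$, and sums. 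Your block-diagonalisation $A\zeta_{kl}=\sum_a(\sigma_A(\zeta))_{al}\zeta_{ka}$ is the correct consequence of \eqref{EQ:A-quant} together with the Peter--Weyl orthogonality relations, your multiplicity count (each block $\sigma_A(\xi)$ occurring $d_\xi$ times, once per row index $k$) is exactly right, and so the identity $\sum_n s_n(A)^r=\sum_{[\xi]\in\Gh}d_\xi\|\sigma_A(\xi)\|_{S_r}^r$ holds and Oloff's theorem finishes the argument. Your route buys two things: it gives the converse for free (the necessity of \eqref{chmult}, which the paper only asserts in the remark following the theorem), and it works with the genuine singular values rather than the diagonal entries of $|\sigma_A(\xi)|$, so you never need to relate $\sum_j|\sigma_A(\xi)|_{jj}^r$ to $\Tr(|\sigma_A(\xi)|^r)$ --- for $0<r<1$ these differ in general, with the inequality pointing the inconvenient way, a subtlety that the paper's displayed chain of equalities glosses over. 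What the paper's route buys is generality: the explicit $g,h$ decomposition in terms of matrix coefficients is precisely the template that extends to $L^{p_1}\to L^{p_2}$ in Theorems \ref{main126} and \ref{main12}, where there is no Hilbert-space structure and Oloff's theorem is unavailable, whereas your argument is confined to $p=2$. Your closing alternative --- feeding the singular value decomposition into Theorem \ref{ch2} --- is a clean bridge between the two viewpoints.
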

\begin{proof} Let us suppose that the symbol $\sigma_A$ satisfies (\ref{chmult}).  
We note that the kernel of the operator $A$ is given by 
\[k(x,y)=\sum_{[\xi]\in\widehat{G} }d_{\xi} \Tr(\xi(x)\sigma_A(\xi)\xi(y)^* ),\]
and we will show that it is well-defined and has the tensor product form of Theorem \ref{ch2} that 
is required for the nuclearity. To abbreviate the notation, we will write
$\sigma(\xi)$ for $\sigma_A(\xi)$.
We begin by writing
\[ \Tr(\xi(x)\sigma(\xi)\xi(y)^* )=\sum\limits_{i,j=1}^{d_{\xi}}(\xi(x)\sigma(\xi))_{ij}\overline{\xi(y)}_{ij},\]
and we set 
\[g_{\xi,ij}(x)=d_{\xi}(\xi(x)\sigma(\xi))_{ij},\,h_{\xi,ij}(y)=(\xi(y)^*)_{ji}=\overline{\xi(y)}_{ij}.\]
For $g_{\xi,ij}(x)$ we have  
\begin{align}
\|g_{\xi,ij}\|_{L^{2}(G)}=&\sqrt{d_{\xi}}\|\sqrt{d_{\xi}}(\xi(x)\sigma(\xi))_{ij}\|_{L^{2}(G)}\nonumber\\
= & \sqrt{d_{\xi}}\|\sum\limits_{k=1}^{d_{\xi}}\sqrt{d_{\xi}}(\xi(x))_{ik}\sigma(\xi)_{kj}\|_{L^{2}(G)}\nonumber\\
= & \sqrt{d_{\xi}}\left(\sum\limits_{k=1}^{d_{\xi}}\sqrt{d_{\xi}}(\xi(x))_{ik}\sigma(\xi)_{kj}, \sum\limits_{k'=1}^{d_{\xi}}\sqrt{d_{\xi}}(\xi(x))_{ik'}\sigma(\xi)_{k'j}\right)_{L^{2}(G)}^{\half}\nonumber\\
= &\sqrt{d_{\xi}}\left(\sum\limits_{k=1}^{d_{\xi}}\sigma(\xi)_{kj}\overline{\sigma(\xi)_{kj}}\right)^{\half}\nonumber\\
= &\sqrt{d_{\xi}}\left(\sum\limits_{k=1}^{d_{\xi}}(\sigma(\xi)^*)_{jk}\sigma(\xi)_{kj}\right)^{\half}\nonumber\\
= &\sqrt{d_{\xi}}\left(\sigma(\xi)^*\sigma(\xi)\right)^\half _{jj}\nonumber\\
= &\sqrt{d_{\xi}}|\sigma(\xi)|_{jj}.\label{diagnorm}
\end{align}
Hence $\|g_{\xi,ij}\|_{L^{2}(G)}^r={d_{\xi}}^{\frac r2}|\sigma(\xi)|_{jj}^r.$

Now, since $\{d_{\xi}^\half{\xi}_{ij}\}$ is an orthonormal set in $L^2(G)$, we have
\[\|\overline{\xi}_{ij}\|_{L^{2}(G)}^r= d_{\xi}^{-\frac{r}{2}}.\]
Therefore, 
\begin{align*}
\sum\limits_{\xi, ij}\|g_{\xi,ij}(\cdot)\|_{L^{2}(G)}^r\|h_{\xi,ij}(\cdot)\|_{L^{2}(G)}^r &\leq \sum\limits_{\xi}d_{\xi}^{\frac r2} d_{\xi}^{-\frac{r}{2}} \sum\limits_{i,j=1}^{d_{\xi}}|\sigma(\xi)|_{jj}^r\\
&= \sum\limits_{\xi}\sum\limits_{j=1}^{d_{\xi}}d_{\xi}|\sigma(\xi)|_{jj}^r\\
&= \sum\limits_{\xi}d_{\xi}\Tr(|\sigma(\xi)|^r)\\
&= \sum\limits_{\xi}d_{\xi}\|\sigma(\xi)\|_{S_r}^r\\
&< \,\infty,
\end{align*}
completing the proof.
\end{proof}
\begin{rem} We point out that one can prove  that the condition (\ref{chmult}) ensuring the $r$-nuclearity for left-invariant operators on $L^2(G)$ is also 
 necessary. We recall that in the Hilbert space setting, the Schatten class of order $r$ agrees with the class of $r$-nuclear operators whenever $0<r\leq 1$ by Oloff's result \cite{Oloff:pnorm}. For the details of Schatten classes of invariant operators on compact Lie groups we refer the reader to the recent work \cite{dr13:schatten}.
\end{rem} 
We will extend Theorem \ref{main125} to the setting of $L^p(G)$ spaces. We shall require the following notation:    
 $\ell^{\infty}$ denotes the $L^{\infty}$-norm  on $\sdxi$ and 
$\|\cdot\|_{op(\ell^{\infty},\ell^{\infty})}$ denotes the operator norm with respect to $\ell^{\infty}$ on $\sdxi$. 
More precisely, for each $d\in\N$, let $B\in\C^{d\times d}$ and $u\in\C^d$. Denoting
$$\|B\|_{op(\ell^{\infty},\ell^{\infty})}:=\max_{1\leq i\leq d} \sum_{j=1}^d |B_{ij}|,$$
we have
$|(Bu)_i|\leq \sum_{j=1}^d |B_{ij}| \max_{1\leq j\leq d} |u_j|\leq \|B\|_{op(\ell^{\infty},\ell^{\infty})}\|u\|_{\ell^\infty},$
so that we get
$$\|Bu\|_{\ell^\infty}\leq \|B\|_{op(\ell^{\infty},\ell^{\infty})}\|u\|_{\ell^\infty},$$
justifying the notation $\|\cdot\|_{op(\ell^{\infty},\ell^{\infty})}$, and the appearance of this norm.
The transpose of the matrix $M$ will be denoted by $M^t.$ 
We first deal with left-invariant operators.

\begin{thm}\label{main126a} 
Let $G$ be a compact Lie group and let $1\leq p_1,p_2 <\infty$, $0<r\leq 1$.
Let $A:L^{p_1}(G)\to L^{p_2}(G)$ be a linear
continuous operator with matrix-valued symbol $\sigma_A(\xi)$ depending only on $\xi$. Then, 
if $1\leq p_2\leq 2$ and  
$$\sum_{[\xi]\in\Gh}d_{\xi}^{1+(\frac{1}{\widetilde{p_1}}-\frac{1}{2})r}\|\sigma_A(\xi)\|_{S_r}^r<\infty,$$
where $\widetilde{p_1}=\min\{2,p_1\}$, then the operator $A:L^{p_1}(G)\rightarrow L^{p_2}(G)$ is $r$-nuclear. 


If  $p_2>2$ and
\[\sum_{[\xi]\in\Gh}d_{\xi}^{1+(\frac{1}{\widetilde{p_1}}-\frac{1}{p_2})r} 
\|(\sigma_A(\xi))^t\|_{op(\ell^{\infty},\ell^{\infty})}^{\frac{p_2-2}{p_2}r}
\|\sigma_A(\xi)\|_{S_{\frac{2r}{p_2}}}^{\frac{2r}{p_2}} <\infty,\]
then $A:L^{p_1}(G)\rightarrow L^{p_2}(G)$ is $r$-nuclear.
\end{thm}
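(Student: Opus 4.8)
The plan is to reduce the $r$-nuclearity to the tensor-product characterisation of Theorem~\ref{ch2}, in its $r$-nuclear form (Remark~\ref{remrn}): it suffices to write the integral kernel of $A$ as a sum $\sum_n g_n(x)h_n(y)$ with $(g_n)\subset L^{p_2}(G)$, $(h_n)\subset L^{q_1}(G)$ where $\frac{1}{p_1}+\frac1{q_1}=1$, and $\sum_n \|g_n\|_{L^{p_2}}^r\|h_n\|_{L^{q_1}}^r<\infty$. Starting from the invariant quantization $k(x,y)=\sum_{[\xi]\in\Gh}d_\xi\Tr(\xi(x)\sigma_A(\xi)\xi(y)^*)$, I would reuse verbatim the decomposition from the proof of Theorem~\ref{main125}, indexing the terms by triples $([\xi],i,j)$ with $g_{\xi,ij}(x)=d_\xi(\xi(x)\sigma_A(\xi))_{ij}$ and $h_{\xi,ij}(y)=\overline{\xi(y)}_{ij}$. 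The entire problem then reduces to estimating $\|g_{\xi,ij}\|_{L^{p_2}}$ and $\|h_{\xi,ij}\|_{L^{q_1}}$ and summing; the verification of summability is itself what justifies that this representation of the kernel is a legitimate nuclear decomposition.

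The factors $h_{\xi,ij}$ are handled immediately by Lemma~\ref{LEM:xiests}: since $q_1$ is conjugate to $p_1$, the two cases $q_1\ge2$ (i.e. $p_1\le2$) and $q_1\le2$ (i.e. $p_1\ge2$) of the lemma both collapse into the single uniform bound $\|h_{\xi,ij}\|_{L^{q_1}}\le d_\xi^{\frac1{\widetilde{p_1}}-1}$, which is exactly where the exponent $\widetilde{p_1}=\min\{2,p_1\}$ enters. For the $g_{\xi,ij}$ I would first record, as in Theorem~\ref{main125}, the $L^2$-identity $\|g_{\xi,ij}\|_{L^2}=\sqrt{d_\xi}\,|\sigma_A(\xi)|_{jj}$ coming from orthogonality of matrix coefficients; after conjugating $\xi$ by a unitary so that $\sigma_A(\xi)^*\sigma_A(\xi)$ is diagonal (a choice that alters neither $A$ nor the unitarily invariant Schatten norms) the quantity $|\sigma_A(\xi)|_{jj}$ is the $j$-th singular value. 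In the case $1\le p_2\le2$ one simply dominates $\|g_{\xi,ij}\|_{L^{p_2}}\le\|g_{\xi,ij}\|_{L^2}$ by normalisation of Haar measure; raising to the $r$-th power, multiplying by the $h$-bound and summing, the $\sum_i$ gives a factor $d_\xi$, the $\sum_j|\sigma_A(\xi)|_{jj}^r$ gives $\|\sigma_A(\xi)\|_{S_r}^r$, and collecting powers of $d_\xi$ yields precisely the exponent $1+(\frac1{\widetilde{p_1}}-\frac12)r$.

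For $p_2>2$ the extra ingredient is the interpolation inequality $\|g_{\xi,ij}\|_{L^{p_2}}\le\|g_{\xi,ij}\|_{L^\infty}^{\frac{p_2-2}{p_2}}\|g_{\xi,ij}\|_{L^2}^{\frac2{p_2}}$ already employed in Lemma~\ref{LEM:xiests}. The $L^2$ factor is treated as above. For the $L^\infty$ factor I would expand $(\xi(x)\sigma_A(\xi))_{ij}=\sum_k\xi(x)_{ik}\sigma_A(\xi)_{kj}$, bound $|\xi(x)_{ik}|\le1$ by unitarity, and recognise the resulting column sum $\sum_k|\sigma_A(\xi)_{kj}|$ as being controlled by the maximal row sum of the transpose, i.e. $\|\sigma_A(\xi)^t\|_{op(\ell^{\infty},\ell^{\infty})}$, giving $\|g_{\xi,ij}\|_{L^\infty}\le d_\xi\,\|\sigma_A(\xi)^t\|_{op(\ell^{\infty},\ell^{\infty})}$ uniformly in $i,j$. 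Substituting both factors, raising to the power $r$, multiplying by $\|h_{\xi,ij}\|_{L^{q_1}}^r$ and summing over $i,j$, the $L^2$ contribution now appears with exponent $\frac{2r}{p_2}$, so that $\sum_j|\sigma_A(\xi)|_{jj}^{2r/p_2}=\|\sigma_A(\xi)\|_{S_{2r/p_2}}^{2r/p_2}$ in the diagonalising representative, while the $op(\ell^{\infty},\ell^{\infty})$-factor carries the power $\frac{p_2-2}{p_2}r$; the accumulated powers of $d_\xi$ collapse to $1+(\frac1{\widetilde{p_1}}-\frac1{p_2})r$, reproducing the stated series.

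I expect the principal obstacle to be the bookkeeping of the $p_2>2$ case: the $L^{p_2}$-norm must be split by interpolation into two genuinely different matrix norms — the operator norm $\|\cdot\|_{op(\ell^{\infty},\ell^{\infty})}$ governing the $L^\infty$ behaviour, which is \emph{not} unitarily invariant, and the Schatten norm $\|\cdot\|_{S_{2r/p_2}}$ governing the $L^2$ behaviour — and one must reconcile the fact that converting $\sum_j|\sigma_A(\xi)|_{jj}^{2r/p_2}$ into a Schatten norm forces the representative in which $\sigma_A(\xi)^*\sigma_A(\xi)$ is diagonal. Checking that this choice is admissible (it leaves $A$, its kernel, and hence its nuclearity unchanged, while invariance of the Schatten norms keeps the final series well defined) and that every power of $d_\xi$ combines exactly to the claimed exponents is the delicate part; the remainder is a direct repetition of the scheme of Theorem~\ref{main125}.
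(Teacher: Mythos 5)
Your argument is essentially identical to the paper's proof: the same decomposition $g_{\xi,ij}(x)=d_\xi(\xi(x)\sigma_A(\xi))_{ij}$, $h_{\xi,ij}(y)=\overline{\xi(y)}_{ij}$, the bound $\|h_{\xi,ij}\|_{L^{q_1}}\leq d_\xi^{-1/\widetilde{q_1}}$ from Lemma \ref{LEM:xiests}, the $L^2$-domination for $p_2\leq 2$, and for $p_2>2$ the interpolation $\|\cdot\|_{L^{p_2}}\leq\|\cdot\|_{L^\infty}^{(p_2-2)/p_2}\|\cdot\|_{L^2}^{2/p_2}$ combined with the bound $|(\xi(x)\sigma(\xi))_{ij}|\leq\|\sigma(\xi)^t\|_{op(\ell^\infty,\ell^\infty)}$, with the same exponent bookkeeping. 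Your explicit remark that one should pass to a representative of $[\xi]$ in which $\sigma_A(\xi)^*\sigma_A(\xi)$ is diagonal, so that the column norms $((\sigma^*\sigma)_{jj})^{1/2}$ genuinely coincide with the singular values and $\sum_j|\sigma(\xi)|_{jj}^{s}$ equals $\|\sigma(\xi)\|_{S_s}^{s}$, is a careful clarification of a step the paper leaves implicit, and is correct since this change of basis affects neither $A$ nor the unitarily invariant norms in the hypothesis.
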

\begin{proof} Let $q_1$ be such that $\frac{1}{p_1}+\frac{1}{q_1}=1$. We will consider $\widetilde{q_1}=\max\{2,q_1\}$ and we observe that
 $\frac{1}{\widetilde{q_1}}+\frac{1}{\widetilde{p_1}}=1$. If $1\leq p_2 \leq 2$ we have, using
 \eqref{diagnorm}, and denoting $\sigma=\sigma_A$,
\[\|(\xi(x)\sigma(\xi))_{ij}\|_{L^{p_2}(G)}^r\leq \|(\xi(x)\sigma(\xi))_{ij}\|_{L^2(G)}^r\leq d_{\xi}^{-\frac{r}{2}}|\sigma(\xi)|_{jj}^r.\]
On the other hand
\[\|\overline{\xi}_{ij}\|_{L^{q_1}(G)}^r\leq d_{\xi}^{-\frac{r}{\widetilde{q_1}}}.\]
Therefore,
\begin{align*}
\sum\limits_{\xi, ij}d_{\xi}^r\|(\xi(x)\sigma(\xi))_{ij}\|_{L^{p_2}(G)}^r\|\overline{\xi(y)}_{ij}\|_{L^{q_1}(G)}^r\leq &\sum\limits_{\xi}d_{\xi}^rd_{\xi}^{-\frac r2}d_{\xi}^{-\frac{r}{\widetilde{q_1}}}\sum\limits_{ij}|\sigma(\xi)|_{jj}^r\\
= & \sum\limits_{\xi}d_{\xi}^{1+(\frac{1}{2}-\frac{1}{\widetilde{q_1}})r}\|\sigma_A(\xi)\|_{S_r}^r\\
= & \sum\limits_{\xi}d_{\xi}^{1+(\frac{1}{\widetilde{p_1}}-\frac{1}{2})r}\|\sigma_A(\xi)\|_{S_r}^r\\
<&\infty.
\end{align*}

We now suppose $p_2>2$. 
Now, if $p_2>2$ we first observe that
\begin{align*}
(\xi(x)\sigma(\xi))_{ij}=\sum\limits_{k=1}^{\dxi}\xi(x)_{ik}\sigma(\xi)_{kj}
=\sum\limits_{k=1}^{\dxi}(\sigma(\xi))_{jk}^t\xi(x)_{ik}.
\end{align*}
Hence and taking into account that $|\xi(x)_{ik}|\leq 1 $, we get
\begin{align}
|(\xi(x)\sigma(\xi))_{ij}|=&|\sum\limits_{k=1}^{\dxi}(\sigma(\xi))_{jk}^t\xi(x)_{ik}|\nonumber\\
\leq&\|(\sigma(\xi))^t\|_{op(\ell^{\infty},\ell^{\infty})}\|(\xi(x)_{i1},\cdots,\xi(x)_{i\dxi})\|_{\ell^{\infty}}\nonumber\\
\leq&\|(\sigma(\xi))^t\|_{op(\ell^{\infty},\ell^{\infty})}.\label{toper}
\end{align}

Then using \eqref{diagnorm} and \eqref{toper} we obtain 
\begin{align*}
\|(\xi(x)\sigma(\xi))_{ij}\|_{L^{p_2}(G)}^r=&\left(\int_{G}|(\xi(x)\sigma(\xi))_{ij}|^{p_2} dx\right)^{\frac{r}{p_2}}\\
=&\left(\int_{G}|(\xi(x)\sigma(\xi))_{ij}|^{p_2-2}|(\xi(x)\sigma(\xi))_{ij}|^{2} dx\right)^{\frac{r}{p_2}}\\
\leq & \sup\limits_{x}|(\xi(x)\sigma(\xi))_{ij}|^{\frac{p_2-2}{p_2}r}\|(\xi(x)\sigma(\xi))_{ij}\|_{L^2(G)}^{\frac{2r}{p_2}}\\
\leq & \|(\sigma(\xi))^t\|_{op(\ell^{\infty},\ell^{\infty})}^{\frac{p_2-2}{p_2}r}d_{\xi}^{-\frac{r}{p_2}}|\sigma(\xi)|_{jj}^{\frac{2r}{p_2}}.
\end{align*}
Therefore,
\begin{align*}
& \sum\limits_{\xi, ij}d_{\xi}^r\|(\xi(x)\sigma(\xi))_{ij}\|_{L^{p_2}(G)}^r\|\overline{\xi(y)}_{ij}\|_{L^{q_1}(G)}^r
\\ \leq &\sum\limits_{\xi}d_{\xi}^rd_{\xi}^{-\frac{r}{p_2}}d_{\xi}^{-\frac{r}{\widetilde{q_1}}} \|(\sigma(\xi))^t\|_{op(\ell^{\infty},\ell^{\infty})}^{\frac{p_2-2}{p_2}r}\sum\limits_{ij}|\sigma(\xi)|_{jj}^{\frac{2r}{p_2}}\\
= &\sum\limits_{\xi}d_{\xi}^rd_{\xi}^{-\frac{r}{p_2}}d_{\xi}^{-\frac{r}{\widetilde{q_1}}} \|(\sigma(\xi))^t\|_{op(\ell^{\infty},\ell^{\infty})}^{\frac{p_2-2}{p_2}r}d_{\xi}\sum\limits_{j}|\sigma(\xi)|_{jj}^{\frac{2r}{p_2}}\\
= &\sum\limits_{\xi}d_{\xi}d_{\xi}^{(1-\frac{1}{\widetilde{q_1}}-\frac{1}{p_2})r}\|(\sigma(\xi))^t\|_{op(\ell^{\infty},\ell^{\infty})}^{\frac{p_2-2}{p_2}r}\Tr(|\sigma(\xi)|^{\frac{2r}{p_2}})\\
= &\sum\limits_{\xi}d_{\xi}d_{\xi}^{(\frac{1}{\widetilde{p_1}}-\frac{1}{p_2})r} \|(\sigma(\xi))^t\|_{op(\ell^{\infty},\ell^{\infty})}^{\frac{p_2-2}{p_2}r}\|\sigma(\xi)\|_{S_{\frac{2r}{p_2}}}^{\frac{2r}{p_2}}\\
<&\infty,
\end{align*}
completing the proof.
\end{proof}
In the particular case of diagonal symbols only depending on $\xi$ we can improve the sufficient condition 
in the above theorem. An example of such behaviour is the sub-Laplacian on $G$ 
that always has a diagonal
symbol in an appropriately chosen basis in the representation spaces.
Moreover, symbols of
left-invariant self-adjoint operators can be chosen to be diagonal by choosing a 
particular representative from each equivalence class $[\xi]\in\Gh$.
We formulate a general result now, and will give its application to the sub-Laplacian
in Subsection \ref{SEC:SU2}.

\begin{thm}\label{main126} 
Let $G$ be a compact Lie group, $1\leq p_1,p_2 <\infty$ and $0<r\leq 1$. Let $A:L^{p_1}(G)\to L^{p_2}(G)$ be a linear
continuous operator with matrix-valued diagonal symbol $\sigma_A(\xi)$ depending only on $\xi$.   
 Assume that   
$$\sum_{[\xi]\in\Gh}d_{\xi}^{1+(\frac{1}{\widetilde{p_1}}-\frac{1}{\widetilde{p_2}})r} \|\sigma_A(\xi)\|_{S_r}^r<\infty,$$
where $\widetilde{p_1}=\min\{2,p_1\}, \widetilde{p_2}=\max\{2,p_2\}$. Then the operator $A:L^{p_1}(G)\rightarrow L^{p_2}(G)$ is $r$-nuclear. 
\end{thm}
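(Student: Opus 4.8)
The plan is to show that the integral kernel of $A$ has the tensor-product factorisation required by Theorem \ref{ch2} in its $r$-nuclear version (Remark \ref{remrn}), following the scheme of the proof of Theorem \ref{main125} but exploiting the diagonality of the symbol to obtain the sharper weight. Writing $\sigma=\sigma_A$, the kernel is
$$k(x,y)=\sum_{[\xi]\in\Gh} d_\xi \Tr(\xi(x)\sigma(\xi)\xi(y)^*)=\sum_{[\xi]\in\Gh}\sum_{i,j=1}^{d_\xi} d_\xi\,(\xi(x)\sigma(\xi))_{ij}\,\overline{\xi(y)}_{ij}.$$
The decisive simplification is that, since $\sigma(\xi)$ is diagonal, the matrix product collapses to $(\xi(x)\sigma(\xi))_{ij}=\xi(x)_{ij}\,\sigma(\xi)_{jj}$, so that each summand factorises through a single matrix coefficient $\xi_{ij}$. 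I therefore set
$$g_{\xi,ij}(x)=d_\xi\,\xi(x)_{ij}\,\sigma(\xi)_{jj},\qquad h_{\xi,ij}(y)=\overline{\xi(y)}_{ij},$$
so that $k(x,y)=\sum_{\xi,ij} g_{\xi,ij}(x)h_{\xi,ij}(y)$, and it remains to estimate the relevant norms.

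Let $q_1$ be the conjugate exponent, $\frac{1}{p_1}+\frac{1}{q_1}=1$, and put $\widetilde{q_1}=\max\{2,q_1\}$. The crucial point, replacing the $op(\ell^{\infty},\ell^{\infty})$ estimate forced in the proof of Theorem \ref{main126a} for $p_2>2$, is that I may now apply Lemma \ref{LEM:xiests} \emph{directly} to the single coefficient $\xi_{ij}$. Since $\widetilde{p_2}=\max\{2,p_2\}$ absorbs both regimes of that lemma (and likewise $\widetilde{q_1}$), I obtain the unified bounds
$$\|\xi_{ij}\|_{L^{p_2}(G)}\leq d_\xi^{-1/\widetilde{p_2}},\qquad \|\xi_{ij}\|_{L^{q_1}(G)}\leq d_\xi^{-1/\widetilde{q_1}}.$$
Consequently
$$\|g_{\xi,ij}\|_{L^{p_2}(G)}=d_\xi\,|\sigma(\xi)_{jj}|\,\|\xi_{ij}\|_{L^{p_2}(G)}\leq d_\xi^{\,1-1/\widetilde{p_2}}\,|\sigma(\xi)_{jj}|,\qquad \|h_{\xi,ij}\|_{L^{q_1}(G)}\leq d_\xi^{-1/\widetilde{q_1}}.$$

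Finally I assemble the sum. Using the conjugacy relation $\frac{1}{\widetilde{q_1}}=1-\frac{1}{\widetilde{p_1}}$, valid because $\widetilde{p_1}=\min\{2,p_1\}$ and $\widetilde{q_1}=\max\{2,q_1\}$ are conjugate, and raising the bounds to the power $r$,
$$\sum_{\xi,ij}\|g_{\xi,ij}\|_{L^{p_2}}^r\|h_{\xi,ij}\|_{L^{q_1}}^r\leq \sum_{[\xi]\in\Gh}\sum_{i,j=1}^{d_\xi} d_\xi^{\,(1-1/\widetilde{p_2})r}\,|\sigma(\xi)_{jj}|^r\,d_\xi^{-(1-1/\widetilde{p_1})r}.$$
The sum over $i$ contributes a factor $d_\xi$, and since $\sigma(\xi)$ is diagonal its singular values are $|\sigma(\xi)_{jj}|$, whence $\sum_j|\sigma(\xi)_{jj}|^r=\Tr(|\sigma(\xi)|^r)=\|\sigma(\xi)\|_{S_r}^r$. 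Collecting the powers of $d_\xi$ yields exactly
$$\sum_{[\xi]\in\Gh} d_\xi^{\,1+(\frac{1}{\widetilde{p_1}}-\frac{1}{\widetilde{p_2}})r}\,\|\sigma(\xi)\|_{S_r}^r,$$
which is finite by hypothesis; by Remark \ref{remrn} this establishes the $r$-nuclearity of $A$.

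I do not expect a genuine obstacle here, since the diagonal structure makes the argument essentially a streamlined version of Theorem \ref{main125}. The one idea that does the work is the observation that diagonality reduces the entries of $\xi(x)\sigma(\xi)$ to scalar multiples $\xi(x)_{ij}\sigma(\xi)_{jj}$ of individual matrix coefficients, so that Lemma \ref{LEM:xiests} applies verbatim in \emph{both} regimes $p_2\leq 2$ and $p_2>2$; this is precisely what removes the need for the $L^\infty$–$L^2$ interpolation and the $op(\ell^\infty,\ell^\infty)$ norm used in the non-diagonal case, and sharpens the symbol factor to the single quantity $\|\sigma(\xi)\|_{S_r}^r$. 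The only care required is the bookkeeping of exponents through the conjugacy relations among $\widetilde{p_1}$, $\widetilde{q_1}$ and $\widetilde{p_2}$.
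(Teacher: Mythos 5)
Your proof is correct and follows essentially the same route as the paper: the same tensor decomposition $g_{\xi,ij}(x)=d_\xi\,\xi(x)_{ij}\sigma(\xi)_{jj}$, $h_{\xi,ij}(y)=\overline{\xi(y)}_{ij}$, the same use of Lemma \ref{LEM:xiests} in both regimes via $\widetilde{p_2}$ and $\widetilde{q_1}$, and the same exponent bookkeeping through $\frac{1}{\widetilde{p_1}}+\frac{1}{\widetilde{q_1}}=1$. The observation you single out, that diagonality collapses $(\xi(x)\sigma(\xi))_{ij}$ to $\xi(x)_{ij}\sigma(\xi)_{jj}$ and thereby removes the need for the $op(\ell^\infty,\ell^\infty)$ norm, is exactly the point of the paper's argument.
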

\begin{proof}  We consider $q_1$ such that $\frac{1}{p_1}+\frac{1}{q_1}=1$ and $\widetilde{q_1}=\max\{2,q_1\}$, and we observe that
 $\frac{1}{\widetilde{q_1}}+\frac{1}{\widetilde{p_1}}=1$. Since $\sigma(\xi)=\sigma_A(\xi)$ is diagonal, and 
 using \eqref{diagnorm}, we have 
\begin{align*}
\|(\xi(x)\sigma(\xi))_{ij}\|_{L^{p_2}(G)}^r=\|\xi(x)_{ij}\sigma(\xi)_{jj}\|_{L^{p_2}(G)}^r
\leq d_{\xi}^{-\frac{r}{\widetilde{p_2}}}|\sigma(\xi)|_{jj}^r.
\end{align*}
On the other hand, by (\ref{ineqA}) we have 
\[\|\overline{\xi}_{ij}\|_{L^{q_1}(G)}^r\leq d_{\xi}^{-\frac{r}{\widetilde{q_1}}}.\]
Therefore,
\begin{align*}
\sum\limits_{\xi, ij}d_{\xi}^r\|(\xi(x)\sigma(\xi))_{ij}\|_{L^{p_2}(G)}^r\|\overline{\xi(y)}_{ij}\|_{L^{q_1}(G)}^r\leq &\sum\limits_{\xi}d_{\xi}^rd_{\xi}^{-\frac{r}{\widetilde{p_2}}}d_{\xi}^{-\frac{r}{\widetilde{q_1}}}\sum\limits_{ij}|\sigma(\xi)|_{jj}^r\\
= & \sum\limits_{\xi}d_{\xi}^{-\frac{r}{\widetilde{p_2}}}d_{\xi}^{\frac{r}{\widetilde{p_1}}}\sum\limits_{j}d_{\xi}|\sigma(\xi)|_{jj}^r\\
= & \sum\limits_{\xi}d_{\xi}^{1+(\frac{1}{\widetilde{p_1}}-\frac{1}{\widetilde{p_2}})r}\|\sigma(\xi)\|_{S_r}^r\\
<&\infty,
\end{align*}
completing the proof.
\end{proof}

We will sometimes give examples of our results on the torus, so we summarise its notation:
\begin{rem}\label{REM:torus}
If $G=\Tn=\Rn/ \Zn$, we have $\widehat{\Tn}\simeq \Zn$, and the collection $\{ \xi_k(x)=e^{2\pi i x\cdot k}\}_{k\in\Zn}$ 
is the orthonormal basis of $L^2(\Tn)$, and all $d_{\xi_k}=1$. 
If an operator $A$ is invariant on $\Tn$, its symbol becomes
$\sigma_A(\xi_k)=\xi_k(x)^* A\xi_k(x)=A\xi_k(0).$ 
In general, on the torus we will often simplify the notation by identifying $\widehat{\Tn}$ with $\Zn$,
and thus writing $\xi\in\Zn$ instead of $\xi_k\in\Zn$. The toroidal quantization 
\begin{equation}\label{EQ:A-torus}
Af(x)=\sum\limits_{\xi\in\Zn} e^{2\pi i x\cdot \xi} \sigma_A(x,\xi)\widehat{f}(\xi)
\end{equation}
has been analysed extensively in \cite{Ruzhansky-Turunen-JFAA-torus} and it 
is a special case of \eqref{EQ:A-quant}, where we have identified, as noted,
$\widehat{\Tn}$ with $\Zn$.
\end{rem}
 
As a consequence of Theorem \ref{main125}  on the torus, we obtain:
\begin{cor}\label{cor125} Let $1\leq p_1,p_2 <\infty$. 
Let $A:L^{p_1}(\tn)\to L^{p_2}(\tn)$ be a linear
continuous operator with symbol $\sigma_A(\xi)$
depending only on $\xi.$ Then
 $A$ is $r$-nuclear provided that its symbol $\sigma_A$ satisfies 
\beq\label{EQ:cond-torus}
\sum_{\xi\in\zn} |\sigma_A(\xi)|^r<\infty.
\eq
\end{cor}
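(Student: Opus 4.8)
The plan is to deduce this as a direct specialisation of Theorem \ref{main125} to the torus, using the simplifications recorded in Remark \ref{REM:torus}. First I would observe that on $\Tn$ every irreducible representation is one-dimensional, so $d_{\xi}=1$ for all $[\xi]\in\widehat{\Tn}\simeq\Zn$. Under the identification of $\widehat{\Tn}$ with $\Zn$, the symbol $\sigma_A(\xi)$ is a $1\times 1$ matrix, i.e. a scalar, for each $\xi\in\Zn$.

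The key step is then to notice how the Schatten-norm degenerates in this case. For a scalar $\sigma_A(\xi)\in\C=\C^{1\times 1}$, the matrix $|\sigma_A(\xi)|=(\sigma_A(\xi)^*\sigma_A(\xi))^{1/2}$ is simply the nonnegative number $|\sigma_A(\xi)|$, so that
\[
\|\sigma_A(\xi)\|_{S_r}^r=\Tr(|\sigma_A(\xi)|^r)=|\sigma_A(\xi)|^r .
\]
Substituting $d_{\xi}=1$ and this identity into the hypothesis \eqref{chmult} of Theorem \ref{main125}, the condition
\[
\sum_{[\xi]\in\widehat{\Tn}}d_{\xi}\,\|\sigma_A(\xi)\|_{S_r}^r<\infty
\]
collapses to exactly \eqref{EQ:cond-torus}, namely $\sum_{\xi\in\Zn}|\sigma_A(\xi)|^r<\infty$.

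There is, however, a subtlety worth addressing, which I expect to be the only real obstacle: Theorem \ref{main125} is stated for operators on $L^2(G)$, whereas Corollary \ref{cor125} is phrased for $A:L^{p_1}(\Tn)\to L^{p_2}(\Tn)$ with general $1\leq p_1,p_2<\infty$. To handle this I would invoke instead Theorem \ref{main126a} (or Theorem \ref{main126}, since scalar symbols on the torus are trivially diagonal), whose hypotheses again involve the weights $d_{\xi}^{1+(\frac{1}{\widetilde{p_1}}-\frac{1}{\widetilde{p_2}})r}$ and the factor $\|\sigma_A(\xi)\|_{S_r}^r$. Because $d_{\xi}=1$, all these $d_{\xi}$-powers equal $1$ regardless of $p_1,p_2$, and likewise the operator-norm factor $\|(\sigma_A(\xi))^t\|_{op(\ell^\infty,\ell^\infty)}$ reduces to $|\sigma_A(\xi)|$ in the one-dimensional case; every condition therefore reduces to \eqref{EQ:cond-torus}. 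Thus the same scalar sum suffices uniformly in $p_1,p_2$, which is precisely the content of the corollary. The proof is then just the assembly of these observations, with no further estimation required.
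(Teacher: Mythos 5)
Your proposal is correct and follows essentially the same route as the paper: the corollary is obtained by specialising the general compact-group criteria to $\Tn$, where $d_{\xi}=1$ and every matrix norm of the scalar symbol (Schatten, operator, transpose-$\ell^\infty$) collapses to $|\sigma_A(\xi)|$, so that each hypothesis reduces to \eqref{EQ:cond-torus}. You are also right to flag the subtlety that Theorem \ref{main125} literally covers only $p_1=p_2=2$; the paper's attribution at that point is imprecise, and the general $(p_1,p_2)$ case is exactly what Theorem \ref{main126} (scalar symbols being trivially diagonal), the invariant-symbol theorem preceding it, or Theorem \ref{main12} via Corollary \ref{main12ab} and Remark \ref{REM:torus-nuclear} supplies, all of whose conditions degenerate to the same scalar sum since all powers of $d_{\xi}$ equal $1$.
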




We shall now consider more general operators so that the symbols may depend also on $x$.

\begin{thm}\label{main12} 
Let $G$ be compact Lie group and $0<r\leq 1$. Let operator $A$ have the matrix symbol $\sigma_A(x,\xi)$. 
Let $1\leq p_1,p_2<\infty$ and let us denote $\widetilde{p_1}=\min\{2,p_1\}.$
Suppose that the symbol $\sigma_A$ satisfies 
$$\sum_{[\xi]\in\Gh}d_{\xi}^{2+\frac{r}{\widetilde{p_1}}}\|\|(\sigma_A(x,\xi))^t\|_{op(\ell^{\infty},\ell^{\infty})}\|_{L^{p_2}(G)}^r<\infty.$$
Then the extension $A:L^{p_1}(G)\rightarrow L^{p_2}(G)$ is $r$-nuclear. 
\end{thm}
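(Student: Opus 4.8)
The plan is to verify the tensor-product kernel decomposition of Theorem \ref{ch2}, in its $r$-nuclear form from Remark \ref{remrn}, following the scheme of Theorem \ref{main126a} but now carrying the $x$-dependence of the symbol through the $L^{p_2}$-norm. First I would read off the kernel from the global quantization \eqref{EQ:A-quant}: since $\widehat{f}(\xi)=\int_G f(y)\xi(y)^*\,dy$, the operator $A$ has kernel $k(x,y)=\sum_{[\xi]\in\Gh}d_\xi\Tr(\xi(x)\sigma_A(x,\xi)\xi(y)^*)$. Expanding the trace as $\Tr(\xi(x)\sigma_A(x,\xi)\xi(y)^*)=\sum_{i,j=1}^{d_\xi}(\xi(x)\sigma_A(x,\xi))_{ij}\overline{\xi(y)}_{ij}$ puts $k$ into the required form with $g_{\xi,ij}(x)=d_\xi(\xi(x)\sigma_A(x,\xi))_{ij}$ and $h_{\xi,ij}(y)=\overline{\xi(y)}_{ij}$. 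It then remains to bound $\sum_{\xi,i,j}\|g_{\xi,ij}\|_{L^{p_2}}^r\|h_{\xi,ij}\|_{L^{q_1}}^r$, where $\frac{1}{p_1}+\frac{1}{q_1}=1$.

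The $h$-factors are handled directly by Lemma \ref{LEM:xiests}: writing $\widetilde{q_1}=\max\{2,q_1\}$, so that $\frac{1}{\widetilde{p_1}}+\frac{1}{\widetilde{q_1}}=1$, it gives $\|h_{\xi,ij}\|_{L^{q_1}}\leq d_\xi^{-1/\widetilde{q_1}}$. For the $g$-factors, the key point — and the only place where the $x$-dependence genuinely intervenes — is that I can no longer use the exact $L^2$-identity \eqref{diagnorm} that produced the Schatten norm in the invariant case, since $\sigma_A(x,\xi)$ cannot be pulled out of the integration in $x$. Instead I would estimate each entry pointwise in $x$ by $|(\xi(x)\sigma_A(x,\xi))_{ij}|=\bigl|\sum_k(\sigma_A(x,\xi))^t_{jk}\,\xi(x)_{ik}\bigr|\leq\|(\sigma_A(x,\xi))^t\|_{op(\ell^{\infty},\ell^{\infty})}$, using $|\xi(x)_{ik}|\leq 1$ exactly as in \eqref{toper} (but now for every $p_2$, not only $p_2>2$). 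Taking $L^{p_2}$-norms in $x$ then yields $\|g_{\xi,ij}\|_{L^{p_2}}\leq d_\xi\,\|\,\|(\sigma_A(x,\xi))^t\|_{op(\ell^{\infty},\ell^{\infty})}\,\|_{L^{p_2}(G)}$, and this bound is \emph{uniform} over the $d_\xi^2$ index pairs $(i,j)$.

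Finally I would sum. Because the bound on the $g$-factors is independent of $(i,j)$, the summation over the $d_\xi^2$ entries contributes a plain factor $d_\xi^2$ (rather than the Schatten norm $\|\sigma_A(\xi)\|_{S_r}$ obtained in the invariant theorems), and collecting the powers of $d_\xi$ gives $d_\xi^2\cdot d_\xi^{r}\cdot d_\xi^{-r/\widetilde{q_1}}=d_\xi^{2+r/\widetilde{p_1}}$, where I use $1-\frac{1}{\widetilde{q_1}}=\frac{1}{\widetilde{p_1}}$. Hence $\sum_{\xi,i,j}\|g_{\xi,ij}\|_{L^{p_2}}^r\|h_{\xi,ij}\|_{L^{q_1}}^r\leq\sum_{[\xi]\in\Gh}d_\xi^{2+r/\widetilde{p_1}}\,\|\,\|(\sigma_A(x,\xi))^t\|_{op(\ell^{\infty},\ell^{\infty})}\,\|_{L^{p_2}(G)}^r$, which is finite by hypothesis, and Remark \ref{remrn} then delivers the $r$-nuclearity.

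The main obstacle is conceptual rather than computational: the loss of the orthonormality identity \eqref{diagnorm} due to the $x$-dependence forces the cruder, uniform-in-$x$ estimate through $\|\cdot\|_{op(\ell^{\infty},\ell^{\infty})}$ and the accompanying full $d_\xi^2$ multiplicity count. This is precisely what produces the heavier dimensional weight $d_\xi^{2+r/\widetilde{p_1}}$ here, in contrast to the lighter $d_\xi^{1+(\frac{1}{\widetilde{p_1}}-\frac12)r}$ of the invariant Theorem \ref{main126a}; everything else reduces to bookkeeping of exponents of $d_\xi$.
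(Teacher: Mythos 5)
Your proposal is correct and follows essentially the same route as the paper's own proof: the same kernel decomposition into $g_{\xi,ij}$, $h_{\xi,ij}$, the same pointwise bound $|(\xi(x)\sigma_A(x,\xi))_{ij}|\leq\|(\sigma_A(x,\xi))^t\|_{op(\ell^{\infty},\ell^{\infty})}$ as in \eqref{toper}, the bound $\|h_{\xi,ij}\|_{L^{q_1}}\leq d_\xi^{-1/\widetilde{q_1}}$ from Lemma \ref{LEM:xiests}, and the identical bookkeeping $d_\xi^2\cdot d_\xi^r\cdot d_\xi^{-r/\widetilde{q_1}}=d_\xi^{2+r/\widetilde{p_1}}$. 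Your closing remark correctly identifies why the $x$-dependence forces the uniform $(i,j)$-estimate and the resulting $d_\xi^2$ multiplicity factor, which is exactly the trade-off implicit in the paper's argument.
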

Here we have denoted $\|\|\sigma_A(x,\xi)\|_{op(\ell^{\infty},\ell^{\infty})}\|_{L^{p_2}(G)}=
\p{\int_G \|\sigma_A(x,\xi)\|_{op(\ell^{\infty},\ell^{\infty})}^{p_2} \ dx}^{\frac{1}{p_2}}.$

\begin{proof} The kernel of the operator $A$ is given by 
\[k(x,y)=\sum_{[\xi]\in\widehat{G} }d_{\xi} \Tr(\xi(x)\sigma_A(x,\xi)\xi(y)^* ),\]
and we will show that it is well-defined and has the tensor product form of Theorem \ref{ch2} that 
is required for the nuclearity. As before, to abbreviate the notation, we will write
$\sigma(x,\xi)$ for $\sigma_A(x,\xi)$.
We begin by writing
\[ \Tr(\xi(x)\sigma(x,\xi)\xi(y)^* )=\sum\limits_{i,j=1}^{d_{\xi}}(\xi(x)\sigma(x,\xi))_{ij}\overline{\xi(y)}_{ij},\]
and we set 
\[g_{\xi,ij}(x)=d_{\xi}(\xi(x)\sigma(x,\xi))_{ij},\,h_{\xi,ij}(y)=(\xi(y)^*)_{ji}=\overline{\xi(y)}_{ij}.\]
A similar argument like in (\ref{toper}) shows that 
\[|(\xi(x)\sigma(x,\xi))_{ij}|\leq \|(\sigma(x,\xi))^t\|_{op(\ell^{\infty},\ell^{\infty})}.\]
Hence 
\begin{align*}
\|g_{\xi,ij}(x)\|_{L^{p_2}(G)}^r=&\|d_{\xi}(\xi(x)\sigma(x,\xi))_{ij}\|_{L^{p_2}(G)}^r\\
\leq & d_{\xi}^r\|\|(\sigma(x,\xi))^t\|_{op(\ell^{\infty},\ell^{\infty})}\|_{L^{p_2}(G)}^r.
\end{align*}
Let $q_1$ be such that
$\frac{1}{p_1}+\frac{1}{q_1}=1$.  
Now, if we denote $\widetilde{q_1}=\max\{2,q_1\}$, we have
$\frac{1}{\widetilde{p_1}}+\frac{1}{\widetilde{q_1}}=1$.
According to (\ref{ineqA}), we have
\[\|\overline{\xi}_{ij}\|_{L^{q_1}(G)}^r\leq d_{\xi}^{-\frac{r}{\widetilde{q_1}}}.\]
Therefore, 
\begin{align*}
\sum\limits_{\xi, ij}\|g_{\xi,ij}(\cdot)\|_{L^{p_2}(G)}^r\|h_{\xi,ij}(\cdot)\|_{L^{q_1}(G)}^r &\leq \sum\limits_{\xi}
d_{\xi}^rd_{\xi}^{-\frac{r}{\widetilde{q_1}}} d_{\xi}^{2}\|\|(\sigma(x,\xi))^t\|_{op(\ell^{\infty},\ell^{\infty})}\|_{L^{p_2}(G)}^r\\
&= \sum\limits_{\xi}d_{\xi}^{2 +\frac{r}{\widetilde{p_1}}}\|\|(\sigma(x,\xi))^t\|_{op(\ell^{\infty},\ell^{\infty})}\|_{L^{p_2}(G)}^r\\
&< \,\infty ,
\end{align*}
completing the proof.
\end{proof}
As a consequence of Theorem \ref{main12}, recalling the notation on the torus in 
Remark \ref{REM:torus}, for the torus group $G=\Tn$, we have:

\begin{cor}\label{main12ab} 
Let $1\leq p_1,p_2 <\infty$ and let $A:L^{p_1}(\Tn)\to L^{p_2}(\Tn)$ be a linear
continuous operator with symbol $\sigma_A(x,\xi)$ satisfying 
$$\sum_{\xi\in\Zn}\|\sigma_A(\cdot,\xi)\|_{L^{p_2}(\Tn)}^r<\infty,$$
then the operator $A:L^{p_1}(G)\rightarrow L^{p_2}(G)$ is $r$-nuclear. 
\end{cor}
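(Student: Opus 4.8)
The plan is to obtain this statement as an immediate specialization of Theorem \ref{main12} to the torus $G=\Tn$, using the identifications recalled in Remark \ref{REM:torus}. The essential observation is that on a commutative group all the representation-theoretic quantities appearing in Theorem \ref{main12} degenerate to their scalar counterparts, so that the sufficient condition there collapses precisely to the one stated in the corollary.

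First I would recall that $\widehat{\Tn}\simeq\Zn$ and that every irreducible unitary representation of $\Tn$ is one-dimensional, so that $d_\xi=1$ for all $\xi\in\Zn$. Consequently the weight factor in the hypothesis of Theorem \ref{main12} trivializes,
$$d_\xi^{2+\frac{r}{\widetilde{p_1}}}=1,$$
independently of $p_1$ and $r$. Next, since $d_\xi=1$, the symbol $\sigma_A(x,\xi)$ is a $1\times1$ matrix, i.e. a scalar. For such a scalar $b$ transposition acts trivially, $(\sigma_A(x,\xi))^t=\sigma_A(x,\xi)$, and the operator norm introduced before Theorem \ref{main126a} reduces to
$$\|b\|_{op(\ell^{\infty},\ell^{\infty})}=\max_{1\leq i\leq 1}\sum_{j=1}^{1}|b_{ij}|=|b|.$$
Therefore $\|(\sigma_A(x,\xi))^t\|_{op(\ell^{\infty},\ell^{\infty})}=|\sigma_A(x,\xi)|$, and integrating in $x$ gives $\|\|(\sigma_A(x,\xi))^t\|_{op(\ell^{\infty},\ell^{\infty})}\|_{L^{p_2}(\Tn)}=\|\sigma_A(\cdot,\xi)\|_{L^{p_2}(\Tn)}$.

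Combining these two reductions, the hypothesis of Theorem \ref{main12} for $G=\Tn$ becomes exactly
$$\sum_{\xi\in\Zn}\|\sigma_A(\cdot,\xi)\|_{L^{p_2}(\Tn)}^r<\infty,$$
which is the assumption of the corollary. Theorem \ref{main12} then yields the $r$-nuclearity of $A\colon L^{p_1}(\Tn)\to L^{p_2}(\Tn)$, and the proof is complete. There is no genuine obstacle here: the whole argument is the verification that the matrix structure trivializes on a commutative group, and the only point deserving a moment's care is the bookkeeping of the $op(\ell^{\infty},\ell^{\infty})$ norm in dimension one, which is immediate.
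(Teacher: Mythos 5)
Your proposal is correct and is exactly the argument the paper intends: the corollary is presented as an immediate consequence of Theorem \ref{main12} via the torus identifications of Remark \ref{REM:torus}, where $d_\xi=1$ kills the dimension factor and the $op(\ell^{\infty},\ell^{\infty})$ norm of the scalar symbol reduces to its absolute value. You have simply written out the bookkeeping the paper leaves implicit.
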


\begin{rem}\label{REM:torus-nuclear}
 (i) If $G=\Tn=\Rn/ \Zn$, an invariant operator $A$ is a Fourier multiplier,
 $\widehat{Af}(k)=a(k)\widehat{f}(k)$ with symbol  
 $\sigma_A(\xi_k)=a(k)$, see Remark \ref{REM:torus}.
 Theorem  \ref{main12} implies that if
 $\sum_{k\in\Zn} |a(k)|^r<\infty$, then 
the operator $T$ is
 $r$-nuclear from $L^{p_1}(\Tn)$ to $L^{p_2}(\Tn)$ for all $1\leq p_1,p_2<\infty$.
 
 (ii) For the convolution operator on $\T$ as in \eqref{EQ:Carleman}, we have $\widehat{\T}\simeq {\mathbb Z}^1$ and
 $\sigma(n)=\widehat{\varkappa}(n)=c_n$, or $\varkappa(x)=\sum_{n=-\infty}^{\infty} c_n e^{2\pi i n x}.$
 In this case Theorem  \ref{main12} 
implies that if $\sum_{n=-\infty}^{\infty} |c_n|^r<\infty$, the operator $Tf=f*\varkappa$ is
 $r$-nuclear from $L^{p_1}(\T)$ to $L^{p_2}(\T)$ for all $1\leq p_1,p_2<\infty$.

(iii) If $p_1=p_2=2$ and $A$ is a left-invariant operator on a compact Lie group $G$, it follows
from Theorem  \ref{main125}
that if $\sum_{[\xi]\in\Gh} d_{\xi} \|\sigma_A(\xi)\|_{S_1}<\infty,$
then $A$ is a trace class operator on $L^{2}(G).$

(iv) We note that the condition of Corollary \ref{main12ab}  required the integrability of the symbol with
respect to $x$ and does not require any regularity.
 \end{rem}

 In order to deduce some interesting consequences we will apply the following lemma proved in \cite{dr:gevrey}:
\begin{lem}\label{lem2} 
Let $G$ be a compact Lie group. Then we have 
$$\sum_{[\xi]\in \widehat{G}}d_{\xi}^2\jp{\xi}^{-s}<\infty$$ 
if and only if $s>\dim G$. 
\end{lem}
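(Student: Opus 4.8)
The plan is to recognise the left-hand side as the trace of a negative power of the operator $(I-\lap)^{1/2}$ and then read off the convergence from the Weyl eigenvalue asymptotics on the $n$-dimensional manifold $G$, where $n=\dim G$. First I would use the Peter--Weyl theorem: the functions $\sqrt{d_\xi}\,\xi_{ij}$ ($1\le i,j\le d_\xi$, $[\xi]\in\Gh$) form an orthonormal basis of $L^2(G)$ consisting of eigenfunctions of $-\lap$, and by \eqref{EQ:Lap-lambda} each matrix coefficient attached to $[\xi]$ carries the eigenvalue $\eigen$. Hence $\eigen$ occurs with multiplicity $d_\xi^2$ for each $[\xi]$, and since $\jp{\xi}=(1+\eigen)^{1/2}$ are exactly the eigenvalues of $(I-\lap)^{1/2}$, we obtain
\[
\sumxi d_\xi^2\jp{\xi}^{-s}
=\sum_{\phi}\bigl(1+\mu_\phi\bigr)^{-s/2}
=\Tr\bigl((I-\lap)^{-s/2}\bigr),
\]
where $\phi$ runs over the orthonormal eigenbasis and $\mu_\phi\ge 0$ is the eigenvalue of $-\lap$ associated to $\phi$.

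Next I would invoke Weyl's law. The operator $(I-\lap)^{1/2}$ is a positive, self-adjoint, elliptic, classical pseudo-differential operator of order $1$ on the compact manifold $G$ of dimension $n=\dim G$. Writing $0<\nu_1\le\nu_2\le\cdots$ for its eigenvalues listed with multiplicity and setting
\[
N(t):=\#\{k:\nu_k\le t\},
\]
the standard Weyl asymptotics give $N(t)=c_n\,t^{\,n}+O(t^{\,n-1})$ as $t\to\infty$ for some $c_n>0$. In particular $N(t)\asymp t^{\,n}$, and for each fixed $\ell$ the number of eigenvalues in a dyadic block satisfies $N(2^{\ell+1})-N(2^{\ell})\asymp 2^{\ell n}$ for large $\ell$.

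Finally I would conclude by a dyadic decomposition of the spectral sum. Discarding the finitely many $\nu_k<1$ and grouping the remaining eigenvalues into blocks $2^{\ell}\le\nu_k<2^{\ell+1}$, the block estimate above gives
\[
\sum_{2^{\ell}\le \nu_k<2^{\ell+1}}\nu_k^{-s}
\;\asymp\;\bigl(N(2^{\ell+1})-N(2^{\ell})\bigr)\,2^{-\ell s}
\;\asymp\;2^{\ell(n-s)},
\]
so that $\Tr\bigl((I-\lap)^{-s/2}\bigr)=\sum_k\nu_k^{-s}$ and the geometric series $\sum_{\ell}2^{\ell(n-s)}$ converge or diverge together. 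The latter converges precisely when $n-s<0$, that is, when $s>\dim G$ (for $s\le n$ the terms do not tend to $0$, forcing divergence). This establishes both implications.

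The only substantial ingredient is the Weyl bound $N(t)\asymp t^{\,n}$, together with the block count $N(2^{\ell+1})-N(2^{\ell})\asymp 2^{\ell n}$; this is where I expect the real content to sit, since it is precisely the place where the dimension of $G$ enters. As an alternative one may derive the same two-sided estimate from the small-time heat-trace asymptotics $\Tr(e^{t\lap})\sim(4\pi t)^{-n/2}\,\mathrm{Vol}(G)$ as $t\to 0^{+}$ combined with a Karamata Tauberian theorem, which I expect to be the main technical point to justify carefully in a fully self-contained argument.
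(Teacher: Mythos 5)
Your proof is correct and follows essentially the same route as the source the paper relies on: the paper does not prove this lemma internally but quotes it from \cite{dr:gevrey}, where the argument is precisely the one you give --- by the Peter--Weyl theorem the number $\jp{\xi}^{2}$ is an eigenvalue of $I-\lap$ with multiplicity $d_{\xi}^{2}$, so the sum equals $\Tr\bigl((I-\lap)^{-s/2}\bigr)$, and Weyl's law for the first-order elliptic operator $(I-\lap)^{1/2}$ on the $n$-dimensional manifold $G$ yields convergence exactly for $s>n$. The only inessential point is that the two-sided dyadic block count (which needs the sharp Weyl asymptotics you quote) is more than required: the upper bound $N(t)\le Ct^{n}$ already gives convergence for $s>n$, while the lower bound $N(t)\ge ct^{n}$ alone forces divergence for $s\le n$.
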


This yields the following corollary, and in Remark \ref{REM:torus2} we note that the 
following orders are in general sharp.

\begin{cor}\label{main17} Let $G$ be a compact Lie group of dimension $n$ and let $0<r\leq 1$. 
Let $1\leq p_1,p_2<\infty$ and let us denote $\widetilde{p_1}=\min\{2,p_1\}.$
Assume that  
$$\|(\sigma_A(x,\xi))^t\|_{op(\ell^{\infty},\ell^{\infty})}\leq Cd_{\xi}^{-\frac{1}{\widetilde{p_1}}}\jp{\xi}^{-\frac{s}{r}}$$ 
with some $s>n$. Then $A:L^{p_1}(G)\rightarrow L^{p_2}(G)$ is $r$-nuclear.
\end{cor}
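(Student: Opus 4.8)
The goal is to derive Corollary~\ref{main17} from Theorem~\ref{main12}, so the plan is to verify that the pointwise symbol bound assumed in the corollary forces the series condition of Theorem~\ref{main12} to converge. The hypothesis gives, uniformly in $x$,
\[
\|(\sigma_A(x,\xi))^t\|_{op(\ell^{\infty},\ell^{\infty})}\leq C d_{\xi}^{-\frac{1}{\widetilde{p_1}}}\jp{\xi}^{-\frac{s}{r}},
\]
and since the right-hand side is independent of $x$, integrating its $p_2$-th power over $G$ (with normalised Haar measure) changes nothing, so that
\[
\|\|(\sigma_A(x,\xi))^t\|_{op(\ell^{\infty},\ell^{\infty})}\|_{L^{p_2}(G)}\leq C d_{\xi}^{-\frac{1}{\widetilde{p_1}}}\jp{\xi}^{-\frac{s}{r}}.
\]

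First I would substitute this into the sum appearing in Theorem~\ref{main12}. Raising to the power $r$ and multiplying by the weight $d_{\xi}^{2+\frac{r}{\widetilde{p_1}}}$ gives a summand bounded by
\[
d_{\xi}^{2+\frac{r}{\widetilde{p_1}}}\cdot C^r d_{\xi}^{-\frac{r}{\widetilde{p_1}}}\jp{\xi}^{-s}
= C^r d_{\xi}^{2}\jp{\xi}^{-s}.
\]
The exponent $\frac{r}{\widetilde{p_1}}$ cancels exactly, which is precisely why the factor $d_{\xi}^{-1/\widetilde{p_1}}$ is placed in the hypothesis. Summing over $[\xi]\in\Gh$ then reduces the whole matter to the convergence of $\sum_{[\xi]\in\Gh} d_{\xi}^{2}\jp{\xi}^{-s}$.

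At this point I would invoke Lemma~\ref{lem2}, which states that $\sum_{[\xi]\in\Gh} d_{\xi}^{2}\jp{\xi}^{-s}<\infty$ if and only if $s>\dim G = n$. Since the corollary assumes $s>n$, this series is finite, and therefore the hypothesis of Theorem~\ref{main12} is satisfied. Applying that theorem yields that $A:L^{p_1}(G)\rightarrow L^{p_2}(G)$ is $r$-nuclear, completing the argument.

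This proof is essentially a direct chain of substitutions, so I do not anticipate a genuine obstacle; the only point requiring care is bookkeeping of the exponents, namely checking that the $d_{\xi}$-powers combine to leave exactly $d_{\xi}^{2}$ and that the $\jp{\xi}$-power is exactly $-s$, so that Lemma~\ref{lem2} applies with the stated threshold $s>n$. The role of the normalised Haar measure (so that the $L^{p_2}$-norm of a constant equals that constant) is what makes the integration step trivial, and it is worth noting explicitly that the bound being $x$-independent is what allows the supremum-type hypothesis to feed directly into the integral condition of Theorem~\ref{main12}.
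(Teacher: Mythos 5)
Your proposal is correct and follows essentially the same route as the paper: substitute the pointwise bound into the series condition of Theorem~\ref{main12} (using that the normalised Haar measure makes the $L^{p_2}$-norm of a constant equal to that constant), observe that the $d_{\xi}^{\pm r/\widetilde{p_1}}$ factors cancel to leave $C^r d_{\xi}^{2}\jp{\xi}^{-s}$, and conclude by Lemma~\ref{lem2}. The exponent bookkeeping matches the paper's computation exactly.
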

\begin{proof} We have 
\begin{align*} d_{\xi}^{2+\frac{r}{\widetilde{p_1}}}\|(\sigma(x,\xi))^t\|_{op(\ell^{\infty},\ell^{\infty})}^r \leq \,
&Cd_{\xi}^{2+\frac{r}{\widetilde{p_1}}}d_{\xi}^{-\frac{r}{\widetilde{p_1}}}\jp{\xi}^{-s}
=C^r d_{\xi}^2\jp{\xi}^{-s}.
\end{align*}
 The result now follows from Lemma \ref{lem2} and Theorem \ref{main12}. 
\end{proof}
As consequence of Theorem \ref{main126} and Lemma \ref{lem2} we have: 
\begin{cor}\label{main126a} 
Let $G$ be a compact Lie group, $1\leq p_1,p_2 <\infty$ and $0<r\leq 1$. Let $A:L^{p_1}(G)\to L^{p_2}(G)$ be a linear
continuous operator with matrix-valued diagonal symbol $\sigma_A(\xi)$ depending only on $\xi$.   
If   
$$\|\sigma_A(\xi)\|_{S_r} \leq Cd_{\xi}^{\frac 1r-(\frac{1}{\widetilde{p_1}}-\frac{1}{\widetilde{p_2}})}\jp{\xi}^{-\frac{s}{r}},$$
where $\widetilde{p_1}=\min\{2,p_1\}, \widetilde{p_2}=\max\{2,p_2\}$. Then the operator $A:L^{p_1}(G)\rightarrow L^{p_2}(G)$ is $r$-nuclear. 
\end{cor}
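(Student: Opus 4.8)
The plan is to reduce the statement directly to Theorem \ref{main126}, following the same device used in the proof of Corollary \ref{main17}: I would insert the assumed pointwise bound on $\|\sigma_A(\xi)\|_{S_r}$ into the summability condition of Theorem \ref{main126} and verify that the powers of $d_\xi$ combine to leave exactly the series controlled by Lemma \ref{lem2}.

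Concretely, I would first raise the hypothesis to the $r$-th power, obtaining
$$\|\sigma_A(\xi)\|_{S_r}^r \leq C^r\, d_\xi^{1-(\frac{1}{\widetilde{p_1}}-\frac{1}{\widetilde{p_2}})r}\,\jp{\xi}^{-s}.$$
Multiplying by the weight $d_\xi^{1+(\frac{1}{\widetilde{p_1}}-\frac{1}{\widetilde{p_2}})r}$ occurring in Theorem \ref{main126}, the fractional contributions cancel and one is left with
$$d_\xi^{1+(\frac{1}{\widetilde{p_1}}-\frac{1}{\widetilde{p_2}})r}\,\|\sigma_A(\xi)\|_{S_r}^r \leq C^r\, d_\xi^{2}\,\jp{\xi}^{-s}.$$
Summing over $[\xi]\in\Gh$ and applying Lemma \ref{lem2}, the right-hand side converges precisely because $s>n=\dim G$, so the hypothesis of Theorem \ref{main126} holds and $A$ is $r$-nuclear.

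I do not expect any genuine obstacle; the single point needing care is the exponent bookkeeping, namely that the power $\frac1r-(\frac{1}{\widetilde{p_1}}-\frac{1}{\widetilde{p_2}})$ in the assumed bound is calibrated exactly so that, after taking $r$-th powers and multiplying by the Theorem \ref{main126} weight, the surviving exponent of $d_\xi$ equals $2$ (and no larger). This is what makes the estimate align with the sharp threshold $s>n$ of Lemma \ref{lem2}, and confirms that the order appearing in the hypothesis is the natural one.
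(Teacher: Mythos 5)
Your proposal is correct and is essentially identical to the paper's own proof: the paper likewise raises the hypothesis to the $r$-th power, multiplies by the weight $d_{\xi}^{1+(\frac{1}{\widetilde{p_1}}-\frac{1}{\widetilde{p_2}})r}$ from Theorem \ref{main126} to obtain $C^r d_{\xi}^{2}\jp{\xi}^{-s}$, and concludes via Lemma \ref{lem2}. Your remark that the exponent is calibrated so that exactly $d_{\xi}^{2}$ survives (matching the threshold $s>\dim G$) is the right observation, and you correctly supply the implicit hypothesis $s>n$ that the corollary's statement omits but clearly intends.
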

\begin{proof}  
\begin{align*} d_{\xi}^{1+(\frac{1}{\widetilde{p_1}}-\frac{1}{\widetilde{p_2}})r}\|\sigma_A(\xi)\|_{S_r}^r \leq \,
&C^rd_{\xi}^{1+(\frac{1}{\widetilde{p_1}}-\frac{1}{\widetilde{p_2}})r}d_{\xi}^{1-(\frac{1}{\widetilde{p_1}}-\frac{1}{\widetilde{p_2}})r}\jp{\xi}^{-s}
=C^r d_{\xi}^2\jp{\xi}^{-s}.
\end{align*}
 The result now follows from Lemma \ref{lem2} and Theorem \ref{main126}. 
\end{proof}
\subsection{Example on the torus}

We observe that on the torus $\tn$ criteria obtained in the above statements are in general sharp. 
In general, we recall that the relation of our setting to the special case of the torus was
outlined in Remark \ref{REM:torus}, with examples given already in Corollary \ref{cor125} 
and in Corollary \ref{main12ab}.

To see the sharpness, 
we establish the following simple characterisation of the nuclearity 
for Bessel potentials on $L^2(\tn)$.

\begin{prop}\label{PROP:torus}
Let $\Delta$ be the Laplacian on the torus $\Tn$ and let $0<r\leq 1$. Then
$(I-\Delta)^{-\frac{\alpha}{2}}$ is $r$-nuclear on $L^2(\tn)$ if and only if $\alpha r>n.$
\end{prop}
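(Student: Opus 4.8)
The plan is to reduce the statement to a single convergence criterion for a lattice sum, and to establish the two implications by two different tools: sufficiency from the $r$-nuclearity criterion already obtained for invariant operators, and necessity from the coincidence of $r$-nuclearity with the Schatten class $S_r$ on a Hilbert space. First I would identify the symbol. The operator $(I-\Delta)^{-\frac{\alpha}{2}}$ is left-invariant on $\tn$, so by Remark \ref{REM:torus} its symbol depends only on $\xi$; since $(I-\Delta)$ acts on $\xi_k(x)=e^{2\pi i x\cdot k}$ with eigenvalue $1+\eigen=\jp{\xi}^2$ by \eqref{EQ:Lap-lambda}, we have
\[ \sigma_{(I-\Delta)^{-\frac{\alpha}{2}}}(\xi)=\jp{\xi}^{-\alpha}, \qquad \xi\in\Zn, \]
under the identification $\widehat{\tn}\simeq\Zn$.

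For the sufficiency direction ($\alpha r>n\Rightarrow r$-nuclear) I would invoke Corollary \ref{cor125}, the torus specialisation of Theorem \ref{main125}, with $p_1=p_2=2$: the operator is $r$-nuclear on $L^2(\tn)$ whenever $\sum_{\xi\in\Zn}\abs{\sigma(\xi)}^r<\infty$, that is, whenever $\sum_{\xi\in\Zn}\jp{\xi}^{-\alpha r}<\infty$. Applying Lemma \ref{lem2} to $G=\tn$, where $\dxi=1$ for all $\xi$ and $\dim\tn=n$, this series converges exactly when $\alpha r>n$, which yields the claimed implication.

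For the necessity direction I would appeal to Oloff's theorem \cite{Oloff:pnorm}: on the Hilbert space $L^2(\tn)$ an operator is $r$-nuclear if and only if it belongs to the Schatten class $S_r$. The Bessel potential $(I-\Delta)^{-\frac{\alpha}{2}}$ is positive and self-adjoint and is diagonalised by the orthonormal basis $\{\xi_k\}_{k\in\Zn}$, so its singular values, counted with multiplicity, are exactly $\jp{\xi}^{-\alpha}$, $\xi\in\Zn$. Hence
\[ \n{(I-\Delta)^{-\frac{\alpha}{2}}}_{S_r}^r=\sum_{\xi\in\Zn}\jp{\xi}^{-\alpha r}, \]
and if the operator is $r$-nuclear then this sum is finite; by Lemma \ref{lem2} this again forces $\alpha r>n$.

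Since both implications reduce to the convergence of the same lattice sum $\sum_{\xi\in\Zn}\jp{\xi}^{-\alpha r}$, the equivalence follows at once. The argument is mostly bookkeeping once the two inputs are in place; the only point requiring genuine care is the correct matching of the singular-value sequence of the (positive, self-adjoint) Bessel potential with the lattice-point counting encoded in Lemma \ref{lem2}, whose sharp threshold $s>\dim G$ supplies precisely the critical relation $\alpha r=n$. This example thereby confirms the sharpness of the criterion of Theorem \ref{main125} asserted in the surrounding discussion.
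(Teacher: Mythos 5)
Your proof is correct and follows essentially the same route as the paper: the necessity direction is identical (Oloff's identification of $r$-nuclearity with the Schatten class $S_r$ on $L^2$, together with the observation that the singular values of the positive multiplier are exactly the symbol values $\jp{\xi}^{-\alpha}$), and both arguments reduce the statement to the convergence of the lattice sum $\sum_{\xi\in\Zn}\jp{\xi}^{-\alpha r}$. The only cosmetic difference is that for sufficiency you route through Corollary \ref{cor125} instead of using Oloff's equivalence in both directions as the paper does; this changes nothing of substance.
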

\begin{proof} 
The symbol of the operator $T=(I-\Delta)^{-\frac{\alpha}{2}}$ is positive, 
hence $T$ being a multiplier operator, it is positive definite and 
 $|T|=\sqrt{T^*T}=T$. Thus, the singular values of $T$ agree with the values of its symbol 
 $ \jp{\xi}^{-\alpha}$. Therefore, $T\in S_r(L^2(\Tn)$ if and only if $\alpha r>n.$ The result now follows from the identification
 of the Schatten class of order $r$ and the class of $r-$nuclear operators \cite{Oloff:pnorm}.   
\end{proof}

\begin{rem}\label{REM:torus2}
In the case of the torus $\Tn$ we have $d_{\xi}=1$. From Proposition \ref{PROP:torus}
it follows that the index $n$ in the sufficient condition in Corollary \ref{main17} cannot be improved. 
\end{rem}

\begin{cor}\label{main12abc} 
Let $1\leq p_1,p_2 <\infty$, $0<r\leq 1$ and let $A:L^{p_1}(\Tn)\to L^{p_2}(\Tn)$ be a linear
continuous operator with symbol $\sigma_A(x,\xi)$ satisfying 
$$\|\sigma_A(x,\xi)\|_{L^{p_2}(\Tn)}\leq C\langle\xi\rangle^{-s/r},$$
for some $s>n$. Then the operator $A:L^{p_1}(\Tn)\rightarrow L^{p_2}(\Tn)$ is $r$-nuclear $(\mbox{for all } p_1,p_2)$. 
\end{cor}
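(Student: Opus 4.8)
The goal is to deduce Corollary \ref{main12abc} from the general torus result in Corollary \ref{main12ab}. The plan is to show that the pointwise decay hypothesis on the $L^{p_2}$-norm of the symbol directly feeds into the summability condition required by Corollary \ref{main12ab}, using the summability lemma (Lemma \ref{lem2}) specialised to the torus.

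First I would recall that on $\Tn$ we have $d_\xi = 1$ for every $\xi \in \Zn$, so Corollary \ref{main12ab} reduces to the statement that $A$ is $r$-nuclear from $L^{p_1}(\Tn)$ to $L^{p_2}(\Tn)$ whenever
\[
\sum_{\xi\in\Zn}\|\sigma_A(\cdot,\xi)\|_{L^{p_2}(\Tn)}^r<\infty.
\]
Thus it suffices to verify this single series condition. Raising the hypothesis $\|\sigma_A(x,\xi)\|_{L^{p_2}(\Tn)}\leq C\jp{\xi}^{-s/r}$ to the power $r$ gives the pointwise (in $\xi$) bound $\|\sigma_A(\cdot,\xi)\|_{L^{p_2}}^r \leq C^r \jp{\xi}^{-s}$, so the tail of the series is dominated by $C^r\sum_{\xi\in\Zn}\jp{\xi}^{-s}$.

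The remaining step is to confirm convergence of $\sum_{\xi\in\Zn}\jp{\xi}^{-s}$ for $s>n$. This is exactly Lemma \ref{lem2} in the torus case: since $d_\xi=1$ on $\Tn$, the lemma asserts that $\sum_{\xi\in\Zn}d_\xi^2\jp{\xi}^{-s}=\sum_{\xi\in\Zn}\jp{\xi}^{-s}<\infty$ if and only if $s>\dim\Tn=n$. Our hypothesis $s>n$ puts us precisely in the convergent regime, so the required series is finite and Corollary \ref{main12ab} applies.

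I do not expect any genuine obstacle here, as the statement is a clean combination of two earlier results; the only point requiring a moment of care is keeping track of the role of $r$, namely that the decay rate $s/r$ in the hypothesis is designed exactly so that after raising to the $r$-th power one lands at the threshold $s>n$ of Lemma \ref{lem2}. It is also worth noting that the conclusion holds uniformly for all $1\leq p_1,p_2<\infty$, since the convergence condition extracted from the hypothesis is independent of $p_1$ (the dependence on $p_1$ in the general Theorem \ref{main12} is carried by the factor $d_\xi^{r/\widetilde{p_1}}$, which is identically $1$ on the torus) and the $p_2$-dependence has been absorbed into the $L^{p_2}$-norm appearing in the hypothesis.
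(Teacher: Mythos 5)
Your proof is correct and follows essentially the route the paper intends: the corollary is left without an explicit proof, but it is precisely the specialisation to $\Tn$ (where $d_\xi=1$) of the argument given for Corollary \ref{main17}, namely raising the decay hypothesis to the power $r$ and feeding the resulting bound $C^r\jp{\xi}^{-s}$ into the summability criterion of Corollary \ref{main12ab} via Lemma \ref{lem2}. Your remarks on why the $p_1$-dependence disappears on the torus are accurate and consistent with the paper's treatment.
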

Using compactness of $G$, the following criterion can be practical:
\begin{cor}\label{main12abcd} 
Let $1\leq p_1,p_2 <\infty$, $0<r\leq 1$ and let $A:L^{p_1}(\Tn)\to L^{p_2}(\Tn)$ be a linear
continuous operator with symbol $\sigma_A(x,\xi)$ satisfying 
$$|\sigma_A(x,\xi)|\leq C\langle\xi\rangle^{-s/r},$$
for some $s>n$. Then the operator $A:L^{p_1}(\Tn)\rightarrow L^{p_2}(\Tn)$ is $r$-nuclear  $(\mbox{for all } p_1,p_2)$. 
\end{cor}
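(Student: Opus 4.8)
The plan is to reduce this statement directly to Corollary \ref{main12abc}, whose hypothesis is phrased in terms of the $L^{p_2}$-norm of the symbol rather than its pointwise size. The entire content of the corollary is the observation that, on a compact group, the former quantity is dominated by the latter; no new estimate on kernels or symbols is needed.

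Concretely, I would first use that the Haar measure on $\Tn$ is normalised, so that $\int_{\Tn}1\,dx=1$. Given the pointwise hypothesis $|\sigma_A(x,\xi)|\leq C\jp{\xi}^{-s/r}$, which holds uniformly in $x\in\Tn$, I would estimate, for each fixed $\xi\in\Zn$,
\[
\|\sigma_A(\cdot,\xi)\|_{L^{p_2}(\Tn)}
=\left(\int_{\Tn}|\sigma_A(x,\xi)|^{p_2}\,dx\right)^{1/p_2}
\leq\left(\int_{\Tn}C^{p_2}\jp{\xi}^{-(s/r)p_2}\,dx\right)^{1/p_2}
=C\jp{\xi}^{-s/r},
\]
where the last equality uses the normalisation of the measure. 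Thus the pointwise bound transfers verbatim into the $L^{p_2}$-bound with the same constant $C$ and the same exponent $s>n$. Having established $\|\sigma_A(\cdot,\xi)\|_{L^{p_2}(\Tn)}\leq C\jp{\xi}^{-s/r}$, I would then invoke Corollary \ref{main12abc} to conclude immediately that $A:L^{p_1}(\Tn)\to L^{p_2}(\Tn)$ is $r$-nuclear for all $1\leq p_1,p_2<\infty$.

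There is no genuine obstacle in this argument; it is a routine consequence of the preceding corollary together with compactness. The only point worth emphasising is that the reduction relies essentially on the finiteness (indeed the normalisation) of the total mass of the Haar measure: a sup-type decay condition in $x$ controls the $L^{p_2}$-norm \emph{only} because $\int_{\Tn}dx=1$. Without such finiteness—for instance on $\Rn$—a pointwise-in-$x$ bound would not in general control the $L^{p_2}$-norm, so the sharpening offered here is genuinely a feature of the compact setting, which is exactly what makes it a practical criterion.
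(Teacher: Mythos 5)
Your proof is correct and is exactly the argument the paper intends: the preamble to Corollary \ref{main12abcd} ("Using compactness of $G$\dots") signals precisely the reduction to Corollary \ref{main12abc} via the normalised Haar measure, which is what you carry out. Nothing further is needed.
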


\subsection{Examples on $\SU2\simeq \mathbb S^{3}$ and on $\SO3$}
\label{SEC:SU2}

Let us now show other examples of the above statements for some particular compact groups. We first consider the case of $G=\SU2$, the group of the unitary $2\times 2$ matrices of determinant one. 
 The same results as given below can be stated for the 3-sphere $\mathbb S^{3}$ by using of    
the identification $\SU2\simeq\mathbb S^{3}$, with the matrix multiplication in 
$\SU2$ corresponding to the quaternionic product on $\mathbb S^{3}$,
with the corresponding identification of the symbolic calculus, see
\cite[Section 12.5]{rt:book}. 

 We refer the reader to \cite[Chapter 12]{rt:book} for the details of the global quantization
\eqref{EQ:A-quant} on $\SU2$ an the details on the representation theory of the group $G=\SU2$.
In this case, we can enumerate the elements of its dual as $\widehat{G}\simeq \frac12 \N_0$, 
with $\N_0=\{0\}\cup\mathbb N$, so that
$$
\widehat{\SU2}=\{ [t^\ell]: t^\ell\in\C^{(2\ell+1)\times (2\ell+1)},  \ell\in \frac12 \N_0\}.
$$
The dimension of each $t^\ell$ is $d_{t^\ell}=2\ell+1$, and there are explicit formulae for $t^\ell$ 
as functions of
Euler angles in terms of the so-called Legendre-Jacobi polynomials, see
\cite[Chapter 11]{rt:book}. The Laplacian on $\SU2$ has eigenvalues
$\lambda_{t^\ell}^2=\ell(\ell+1)$, so that we have $\jp{t^\ell}\approx \ell$.
With this, Corollary \ref{main17} becomes:

\begin{cor}\label{COR:SU2} 
Let $A:L^{p_1}(\SU2)\to L^{p_2}(\SU2)$ be an operator with matrix symbol 
$$
\sigma_A(x,\ell)\equiv \sigma_A(x,t^\ell):=t^\ell(x)^* At^\ell(x),\quad \ell\in\frac12\N_0.
$$
Let $s>3$ and $\widetilde{p_1}=\min\{2,p_1\}$.
If there is a constant $C>0$ such that 
$$
\|\|(\sigma_A(x,\ell))^t\|_{op(\ell^{\infty},\ell^{\infty})}\|_{L^{p_2}(G)}\leq C \ell^{-\frac{1}{\widetilde{p_1}}-\frac{s}{r}}
$$ 
for all $\ell\in\frac12\N$, then 
$A:L^{p_1}(\SU2)\rightarrow L^{p_2}(\SU2)$ is $r$-nuclear.
\end{cor}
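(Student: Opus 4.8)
The plan is to derive Corollary \ref{COR:SU2} as a direct specialization of Corollary \ref{main17} to the group $G=\SU2$, using the explicit representation-theoretic data recalled just before the statement. First I would recall from the preceding discussion that $\dim \SU2 = 3$, so the hypothesis $s>3$ in Corollary \ref{main17} becomes exactly $s>\dim G$, which is the correct threshold coming from Lemma \ref{lem2}. Next I would invoke the identification $\widehat{\SU2}\simeq \frac12\N_0$, under which each class is represented by $t^\ell$ of dimension $d_{t^\ell}=2\ell+1$, and the eigenvalue relation $\lambda_{t^\ell}^2=\ell(\ell+1)$ giving $\jp{t^\ell}=(1+\ell(\ell+1))^{1/2}\approx \ell$ for large $\ell$.

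The substantive step is then to check that the assumed decay
$$
\|\|(\sigma_A(x,\ell))^t\|_{op(\ell^{\infty},\ell^{\infty})}\|_{L^{p_2}(G)}\leq C\,\ell^{-\frac{1}{\widetilde{p_1}}-\frac{s}{r}}
$$
matches, up to equivalence of weights, the hypothesis of Corollary \ref{main17}, namely
$$
\|(\sigma_A(x,\xi))^t\|_{op(\ell^{\infty},\ell^{\infty})}\leq C\,d_{\xi}^{-\frac{1}{\widetilde{p_1}}}\jp{\xi}^{-\frac{s}{r}}.
$$
Here I would use that $d_{t^\ell}=2\ell+1\approx \ell$ and $\jp{t^\ell}\approx \ell$, so that $d_{t^\ell}^{-1/\widetilde{p_1}}\jp{t^\ell}^{-s/r}\approx \ell^{-1/\widetilde{p_1}}\ell^{-s/r}=\ell^{-\frac{1}{\widetilde{p_1}}-\frac{s}{r}}$, absorbing the implicit constants from these equivalences into $C$. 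A minor point to address is that Corollary \ref{main17} is phrased with a pointwise bound on the symbol, whereas the present statement places the $L^{p_2}(G)$-norm in $x$ on the left; the cleaner route is to appeal directly to Theorem \ref{main12} instead, whose hypothesis already contains $\|\,\|(\sigma_A(x,\xi))^t\|_{op(\ell^\infty,\ell^\infty)}\,\|_{L^{p_2}(G)}$, and then reproduce the short computation of Corollary \ref{main17} with this mixed norm in place of the pointwise one. I would therefore substitute the assumed bound into the convergence condition of Theorem \ref{main12}:
$$
\sum_{\ell\in\frac12\N_0} d_{t^\ell}^{2+\frac{r}{\widetilde{p_1}}}\,\|\,\|(\sigma_A(x,\ell))^t\|_{op(\ell^\infty,\ell^\infty)}\,\|_{L^{p_2}(G)}^r
\lesssim \sum_{\ell} \ell^{2+\frac{r}{\widetilde{p_1}}}\,\ell^{-\frac{r}{\widetilde{p_1}}-s}
= \sum_{\ell} \ell^{2-s},
$$
and observe this agrees with $\sum_{[\xi]}d_\xi^2\jp{\xi}^{-s}$ under the SU(2) identification, which is finite precisely when $s>3=\dim\SU2$ by Lemma \ref{lem2}.

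There is no genuine obstacle here; the result is a clean corollary. The only point requiring mild care is the bookkeeping of the exponent of $\ell$ when raising the hypothesized bound to the $r$-th power and matching it against the weight $d_{t^\ell}^{2+r/\widetilde{p_1}}$ appearing in Theorem \ref{main12}, together with the replacement of the two distinct weights $d_\xi$ and $\jp{\xi}$ by the single parameter $\ell$, which is legitimate since both are comparable to $\ell$ on $\SU2$. Once this is verified, the finiteness of $\sum_\ell \ell^{2-s}$ for $s>3$ gives exactly the hypothesis of Theorem \ref{main12}, and hence the $r$-nuclearity of $A:L^{p_1}(\SU2)\to L^{p_2}(\SU2)$. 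I would close by remarking that the same argument transfers verbatim to $\mathbb S^3$ and, with the obvious modification of the dual, to $\SO3$ via the identification of their symbolic calculi.
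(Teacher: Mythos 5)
Your proof is correct and follows essentially the same route as the paper, which presents this corollary as a direct specialization of Corollary \ref{main17} (itself a consequence of Theorem \ref{main12} and Lemma \ref{lem2}) using $d_{t^\ell}=2\ell+1\approx\ell$, $\jp{t^\ell}\approx\ell$ and $\dim\SU2=3$. Your observation that the hypothesis here carries an $L^{p_2}$-norm in $x$ rather than the pointwise bound of Corollary \ref{main17}, and that one should therefore feed the bound directly into the summability condition of Theorem \ref{main12}, is exactly the right way to make the deduction precise.
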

For left-invariant operators with diagonal symbols on $\SU2$, as a consequence of 
 Corollary \ref{main126a} we have: 
\begin{cor}\label{COR3:SU2} 
Let $1\leq p_1,p_2<\infty$, $0<r\leq 1$ and let $\widetilde{p_1}=\min\{2,p_1\}$ and $\widetilde{p_2}=\max\{2,p_2\}$.
Let $A:L^{p_1}(\SU2)\to L^{p_2}(\SU2)$ be an operator with diagonal symbol $\sigma_A(\ell)$ such that
\[\|\sigma_A(\ell)\|_{S_r}\leq C\ell^{\frac{1-s}{r}-(\frac{1}{\widetilde{p_1}}-\frac{1}{\widetilde{p_2}})},\]
for some $s>3$. Then  
$A:L^{p_1}(\SU2)\rightarrow L^{p_2}(\SU2)$ is $r$-nuclear.
\end{cor}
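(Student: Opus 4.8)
The plan is to obtain this statement as a direct specialisation of Corollary \ref{main126a} to the group $G=\SU2$, for which $\dim G=3$. The only facts about $\SU2$ that I would invoke are the explicit values of the two quantities entering the hypothesis of Corollary \ref{main126a}: the representation dimensions $d_{t^\ell}=2\ell+1$ and the weights $\jp{t^\ell}=(1+\ell(\ell+1))^{1/2}$ arising from the Laplacian eigenvalues $\lambda_{t^\ell}^2=\ell(\ell+1)$. Both of these are comparable to $\ell$, uniformly in $\ell\in\frac12\N$, up to fixed multiplicative constants.

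First I would rewrite the hypothesis of Corollary \ref{main126a} using the comparabilities $d_{t^\ell}\approx\ell$ and $\jp{t^\ell}\approx\ell$. Collecting the powers of $\ell$, the required bound
$$\|\sigma_A(\xi)\|_{S_r}\leq Cd_{\xi}^{\frac1r-(\frac{1}{\widetilde{p_1}}-\frac{1}{\widetilde{p_2}})}\jp{\xi}^{-\frac{s}{r}}$$
turns into
$$\|\sigma_A(\ell)\|_{S_r}\leq C\ell^{\frac{1-s}{r}-(\frac{1}{\widetilde{p_1}}-\frac{1}{\widetilde{p_2}})},$$
which is exactly the decay condition assumed in the statement. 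The hypothesis $s>3$ coincides with the requirement $s>\dim\SU2$ that is implicit in Corollary \ref{main126a} through Lemma \ref{lem2}, so this application is legitimate.

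I do not expect any genuine obstacle here, since the argument amounts to matching exponents once one recognises that $d_{t^\ell}$ and $\jp{t^\ell}$ are of the same order $\ell$. The only bookkeeping is to absorb the implied constants from $2\ell+1\approx\ell$ and $(1+\ell(\ell+1))^{1/2}\approx\ell$ into the constant $C$; having done so, the hypotheses of the two corollaries coincide and the $r$-nuclearity of $A\colon L^{p_1}(\SU2)\to L^{p_2}(\SU2)$ follows at once from Corollary \ref{main126a}.
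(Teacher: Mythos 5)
Your proposal is correct and follows exactly the route the paper intends: Corollary \ref{COR3:SU2} is stated there as an immediate specialisation of Corollary \ref{main126a} to $G=\SU2$, using $d_{t^\ell}=2\ell+1\approx\ell$, $\jp{t^\ell}=(1+\ell(\ell+1))^{1/2}\approx\ell$, and $\dim\SU2=3$, with the comparability constants absorbed into $C$. The paper gives no further argument, so your exponent bookkeeping is precisely its proof.
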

In particular we will apply the above corollary to the Laplacian and the
sub-Laplacian.

If $\lapsu2$ denotes the Laplacian on $\SU2$, 
we have $\lapsu2 t^{\ell}_{mn}(x)=-\ell(\ell+1) t^{\ell}_{mn}(x)$ for all $\ell,m,n$ and $x\in G$,
so that the symbol of $I-\lapsu2$ is given by
$$\sigma_{I-\lapsu2}(x,\ell)=(1+\ell(\ell+1)) I_{2\ell+1},$$ 
where
$I_{2\ell+1}\in \C^{(2\ell+1)\times (2\ell+1)}$ is the identity matrix.
 Hence, $\sigma_{I-\lapsu2}(x,\ell)$ is diagonal and independent of $x$. 
 Consequently, Corollary \ref{COR3:SU2} applied to 
 $1\leq p=p_1=p_2<\infty$  says that the operator
$(I-\lapsu2)^{-\frac\alpha{2}}$ is $r$-nuclear on $L^p(\SU2)$ provided that 
$\ell^{\frac{1}{r}-\alpha}\leq C \ell^{\frac{1-s}r}$ for $s>3$. Summarising, we obtain

\begin{cor}\label{COR:su2-Lap}
For $\alpha > \frac 3r$, $0<r\leq 1$ and $1\leq p<\infty$, the operator
$(I-\lapsu2)^{-\frac{\alpha}{2}}$ is $r$-nuclear on $L^p(\SU2)$.
\end{cor}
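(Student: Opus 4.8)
The plan is to use the fact, recorded just above the statement, that $I-\lapsu2$ has the diagonal $x$-independent symbol $(1+\ell(\ell+1))I_{2\ell+1}$, and then to specialise Corollary \ref{COR3:SU2} to $p_1=p_2=p$. Since $\lapsu2 t^\ell_{mn}=-\ell(\ell+1)t^\ell_{mn}$ and $t^\ell$ is unitary, the multiplier $(I-\lapsu2)^{-\frac{\alpha}{2}}$ is left-invariant with symbol
\[
\sigma(\ell)=\big(1+\ell(\ell+1)\big)^{-\frac{\alpha}{2}}I_{2\ell+1},
\]
which is again diagonal and independent of $x$, so the hypotheses of Corollary \ref{COR3:SU2} are in force.

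The next step is to evaluate the Schatten norm entering that criterion. Since $\sigma(\ell)$ is the scalar $\big(1+\ell(\ell+1)\big)^{-\alpha/2}$ times the $(2\ell+1)\times(2\ell+1)$ identity matrix, raising $|\sigma(\ell)|$ to the power $r$ inside the trace and taking the $r$-th root gives
\[
\|\sigma(\ell)\|_{S_r}=(2\ell+1)^{\frac1r}\big(1+\ell(\ell+1)\big)^{-\frac{\alpha}{2}}.
\]
Invoking $\jp{t^\ell}\approx\ell$, equivalently $1+\ell(\ell+1)\approx\ell^2$, I would conclude that $\|\sigma(\ell)\|_{S_r}$ is comparable to $\ell^{\frac1r-\alpha}$ as $\ell\to\infty$.

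Finally I would match this decay against the one demanded by Corollary \ref{COR3:SU2}, namely a bound of the form $C\,\ell^{\frac{1-s}{r}}$ with some $s>3$. Comparing exponents, $\frac1r-\alpha\le\frac{1-s}{r}$ is equivalent to $\alpha\ge\frac{s}{r}$, and since $s>3$ is free and can be taken arbitrarily close to $3$, the hypothesis $\alpha>\frac3r$ supplies an admissible $s$ and thus verifies the decay condition; Corollary \ref{COR3:SU2} then yields the $r$-nuclearity of $(I-\lapsu2)^{-\frac{\alpha}{2}}$ on $L^p(\SU2)$. I do not expect a real obstacle here, since all the analytic work is already absorbed into Corollary \ref{COR3:SU2}; the only point requiring attention is the correct bookkeeping of the dimensional factor $(2\ell+1)^{1/r}$ arising from the Schatten norm of a scalar matrix together with the asymptotics $1+\ell(\ell+1)\approx\ell^2$, after which the statement is a direct specialisation.
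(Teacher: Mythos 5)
Your route is exactly the paper's: identify the diagonal, $x$-independent symbol $\sigma(\ell)=(1+\ell(\ell+1))^{-\alpha/2}I_{2\ell+1}$, compute
$\|\sigma(\ell)\|_{S_r}=(2\ell+1)^{1/r}(1+\ell(\ell+1))^{-\alpha/2}\approx \ell^{\frac1r-\alpha}$,
and feed this into Corollary \ref{COR3:SU2} with $p_1=p_2=p$. The Schatten-norm computation and the asymptotics $\jp{t^\ell}\approx\ell$ are handled correctly.

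There is, however, a bookkeeping slip at the final matching step, and it is worth flagging even though the paper's own text makes the identical slip. The hypothesis of Corollary \ref{COR3:SU2} is not $\|\sigma_A(\ell)\|_{S_r}\le C\ell^{\frac{1-s}{r}}$ but
$\|\sigma_A(\ell)\|_{S_r}\le C\ell^{\frac{1-s}{r}-(\frac{1}{\widetilde{p_1}}-\frac{1}{\widetilde{p_2}})}$,
and for $p_1=p_2=p$ one has $\frac{1}{\widetilde{p_1}}-\frac{1}{\widetilde{p_2}}=\left|\frac12-\frac1p\right|\ge 0$, with equality only when $p=2$. Keeping this term, the exponent comparison reads $\frac1r-\alpha\le\frac{1-s}{r}-\left|\frac12-\frac1p\right|$, i.e.\ $\alpha\ge\frac{s}{r}+\left|\frac12-\frac1p\right|$, so letting $s\downarrow 3$ a literal application of Corollary \ref{COR3:SU2} (equivalently of Theorem \ref{main126}) yields $r$-nuclearity only for $\alpha>\frac3r+\left|\frac12-\frac1p\right|$. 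Your argument therefore establishes the corollary as stated only for $p=2$; for $p\neq 2$ it proves a weaker statement, and closing the gap would require either accepting the extra $\left|\frac12-\frac1p\right|$ in the hypothesis or an argument that does not pass through Theorem \ref{main126}.
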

%

  
If $p=2$, the order $\alpha r>3$ is sharp, see \cite{dr13:schatten}.  
 
\medskip 
We shall now consider the group $\SO3$ of the $3\times 3$ real orthogonal matrices 
of determinant one. For the details of the representation theory and the global 
quantization of $\SO3$ we refer the reader to \cite[Chapter 12]{rt:book}.
 The dual in this case can be identified as $\widehat{G}\simeq \ene_0$, so that
$$
\widehat{\SO3}=\{ [t^\ell]: t^\ell\in\C^{(2\ell+1)\times (2\ell+1)},  \ell\in \N_0\}.
$$
The dimension of each $t^\ell$ is $d_{t^\ell}=2\ell+1$. The Laplacian on $\SO3$ has eigenvalues
$\lambda_{t^\ell}^2=\ell(\ell+1)$, so that we have $\jp{t^\ell}\approx \ell$.
By the same argument as above, Corollary \ref{COR:su2-Lap}
also holds for the Laplacian on $\SO3$.

Let us fix three invariant vector fields $D_1, D_2, D_3$ on $\SO3$ 
corresponding to the derivatives with
respect to the Euler angles. We refer to \cite[Chapter 11]{rt:book} for the explicit formulae for these.
However, for our purposes here we note that the sub-Laplacian $\mathcal L_{sub}=D_1^2+D_2^2$,
with an appropriate choice of basis in the representation spaces, has the diagonal symbol given by
\begin{equation}\label{EQ:SO3-subLap}
\sigma_{\mathcal L_{sub}}(\ell)_{mn}=(m^2-\ell(\ell+1))\delta_{mn},\quad m,n\in\mathbb Z,\;
-\ell\leq m,n\leq\ell,
\end{equation}
where $\delta_{mn}$ is the Kronecker delta.
The operator $\mathcal L_{sub}$ is a second order hypoelliptic operator and we can define the
powers $(I-\mathcal L_{sub})^{-\alpha/2}$. These are pseudo-differential operators with symbols
$$\sigma_{(I-\mathcal L_{sub})^{-\alpha/2}}(\ell)_{mn}=(1+\ell(\ell+1)-m^2)^{-\alpha/2}\delta_{mn}.$$
We now have
$$
\|\sigma_{(I-\mathcal L_{sub})^{-\alpha/2}}(\ell)\|_{S_r}=
\left(\Tr (\sigma_{(I-\mathcal L_{sub})^{-\alpha/2}}(\ell))^r\right)^{\frac 1r}=
\left(\sum_{m=-\ell}^\ell\left(1+\ell(\ell+1) - m^2\right)^{-\frac{\alpha r}{2}}\right)^{\frac 1r},$$
where $\ell\in\N_0$. Comparing with the integral 
$$
\int_{-R}^R(1+R^2-x^2)^{-\frac{\alpha r}{2}}dx\approx C R^{-\frac{\alpha r}{2}}
\int_0^R(1+R-x)^{-\frac{\alpha r}{2}}dx\approx C R^{-\frac{\alpha r}{2}},
$$
for $\alpha r>2$ and large $R$,
it follows that $\sum_{m=-\ell}^\ell\left(1+\ell(\ell+1) - m^2\right)^{-\frac{\alpha r}{2}}$ 
is of order $\ell^{-\frac{\alpha r}{2}}$.  Now, the inequality
\[\ell^{-\frac{\alpha }{2}}\leq C\ell^{(1-s)/r},\] 
 with $s>3$ holds if and only $\alpha >\frac 4r$. 
 If $1\leq p=p_1=p_2<\infty$, as a consequence of 
Corollary \ref{COR3:SU2} we obtain the condition for the
nuclearity of the operator $(1-\mathcal L_{sub})^{-\alpha/2}$:

\begin{cor}\label{COR:su2-subLap}
For $\alpha> \frac 4r$ with $0<r\leq 1$ and $1\leq p<\infty$, the operator
$(I-\mathcal L_{sub})^{-\frac{\alpha}{2}}$ is $r$-nuclear on $L^p(\SO3)$.
The same conclusion holds for the same powers (of sub-Laplacians) 
on $L^p(\SU2)$ or on $L^p(\mathbb S^3)$.
\end{cor}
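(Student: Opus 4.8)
The plan is to recognise $(I-\mathcal L_{sub})^{-\frac{\alpha}{2}}$ as a left-invariant operator whose symbol is diagonal, and then to reduce everything to the diagonal-symbol criterion of Corollary \ref{COR3:SU2} (equivalently, Theorem \ref{main126} combined with Lemma \ref{lem2}). The diagonality is guaranteed by \eqref{EQ:SO3-subLap}: if $\sigma_{\mathcal L_{sub}}(\ell)$ is diagonal in the chosen basis then so is every function of it, in particular $\sigma_{(I-\mathcal L_{sub})^{-\alpha/2}}(\ell)_{mn}=(1+\ell(\ell+1)-m^2)^{-\alpha/2}\delta_{mn}$. Thus the only thing left to check is the decay of the Schatten norm $\|\sigma_{(I-\mathcal L_{sub})^{-\alpha/2}}(\ell)\|_{S_r}$ as $\ell\to\infty$.

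First I would write $\|\sigma_{(I-\mathcal L_{sub})^{-\alpha/2}}(\ell)\|_{S_r}=\big(\sum_{m=-\ell}^\ell(1+\ell(\ell+1)-m^2)^{-\frac{\alpha r}{2}}\big)^{1/r}$ and estimate the inner sum. The summand is largest at the endpoints $m=\pm\ell$, where $1+\ell(\ell+1)-m^2=1+\ell$; setting $m=\ell-k$ gives $1+\ell(\ell+1)-m^2\approx \ell(1+2k)$ for $k$ small compared with $\ell$, so each endpoint contributes of order $\ell^{-\frac{\alpha r}{2}}\sum_{k\geq 0}(1+2k)^{-\frac{\alpha r}{2}}$. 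This series converges precisely when $\alpha r>2$, and the comparison with $\int_{-R}^R(1+R^2-x^2)^{-\frac{\alpha r}{2}}\,dx\approx CR^{-\frac{\alpha r}{2}}$ (with $R\approx\ell$) confirms that the full sum is of order $\ell^{-\frac{\alpha r}{2}}$; hence $\|\sigma_{(I-\mathcal L_{sub})^{-\alpha/2}}(\ell)\|_{S_r}\approx\ell^{-\frac{\alpha}{2}}$.

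With this estimate I would then invoke Corollary \ref{COR3:SU2} with $p_1=p_2=p$, recalling $d_{t^\ell}=2\ell+1\approx\ell$ and $\jp{t^\ell}\approx\ell$ on $\SO3$. The hypothesis there becomes $\ell^{-\frac{\alpha}{2}}\leq C\ell^{(1-s)/r}$ for some $s>3$, and comparing exponents this holds exactly when $\frac{\alpha}{2}>\frac2r$, i.e. when $\alpha>\frac4r$, giving the $r$-nuclearity on $L^p(\SO3)$. The cases of $\SU2$ and of $\mathbb S^3$ (via $\SU2\simeq\mathbb S^3$) follow by the identical argument, since the sub-Laplacian there also has a diagonal symbol of the same shape, now with $\ell\in\frac12\N_0$ and $m\in\{-\ell,-\ell+1,\dots,\ell\}$, for which the same integral comparison yields the same order $\ell^{-\frac{\alpha}{2}}$.

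The main obstacle is the sharp evaluation of $\sum_{m=-\ell}^\ell(1+\ell(\ell+1)-m^2)^{-\frac{\alpha r}{2}}$. The crude bound (number of terms times the largest summand) gives only $\ell^{1-\frac{\alpha r}{2}}$, which carries a spurious factor of $\ell$ and would force an unnecessarily large $\alpha$; the purpose of the integral comparison is precisely to capture the concentration of the sum near the two endpoints $m=\pm\ell$ and thereby remove this factor, and it is at this step that the threshold $\alpha r>2$ enters.
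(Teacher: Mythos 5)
Your proposal is correct and follows essentially the same route as the paper: diagonality of $\sigma_{(I-\mathcal L_{sub})^{-\alpha/2}}(\ell)$, the estimate $\sum_{m=-\ell}^{\ell}(1+\ell(\ell+1)-m^2)^{-\alpha r/2}\approx \ell^{-\alpha r/2}$ for $\alpha r>2$ via the comparison with $\int_{-R}^{R}(1+R^2-x^2)^{-\alpha r/2}\,dx$, and then Corollary \ref{COR3:SU2} with $p_1=p_2=p$ to arrive at $\alpha>\frac{4}{r}$. Your substitution $m=\ell-k$ is just a discrete rephrasing of the paper's integral comparison (correctly identifying the endpoint concentration that removes the spurious factor of $\ell$), and you even reproduce the paper's own simplification of reading the hypothesis of Corollary \ref{COR3:SU2} as $\ell^{-\alpha/2}\leq C\ell^{(1-s)/r}$ without the $(\frac{1}{\widetilde{p_1}}-\frac{1}{\widetilde{p_2}})$ term in the exponent.
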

 
\section{Trace formulae on $L^p(G)$ and distribution of eigenvalues} 
\label{SEC:nuclearity-pr}

We now turn to some applications of the $r$-nuclearity on $L^p(G)$ spaces for the trace formulae, the Lidskii formula and the distribution of eigenvalues. In the special case $1\leq p_1=p_2=p<\infty$, applying Theorem \ref{THM:evs} and Theorem \ref{main12} we obtain:



\begin{cor}\label{main13ac} Let $G$ be compact Lie group and $0<r\leq 1$.
Let $1\leq p<\infty$ and let us denote $\widetilde{p}=\min\{2,p\}.$
Let $\sigma_A(x,\xi)$ be the matrix symbol of a bounded operator $A:L^p(G)\to L^p(G)$ such that
$$\sum_{[\xi]\in\Gh}d_{\xi}^{2+\frac{r}{\widetilde{p}}}\|\|(\sigma_A(x,\xi))^t\|_{op(\ell^{\infty},\ell^{\infty})}\|_{L^{p}(G)}^r<\infty.$$
Then $A:L^{p}(G)\rightarrow L^{p}(G)$ is $r$-nuclear and 
\[\sum\limits_{n=1}^{\infty}|\lambda_n(A)|^{\frac{2r}{2-r}}<\infty .\]
\end{cor}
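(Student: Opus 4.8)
The plan is to obtain this as an immediate combination of two results already established in the excerpt, namely Theorem \ref{main12} (the $r$-nuclearity criterion for operators with general $x$-dependent symbols) and Theorem \ref{THM:evs} (the eigenvalue estimate for $r$-nuclear operators on a Banach space with the approximation property). The corollary is precisely the specialisation of these to the diagonal case $p_1=p_2=p$ on $E=L^p(G)$.

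First I would verify that the hypothesis of the corollary is literally the hypothesis of Theorem \ref{main12} with $p_1=p_2=p$. Indeed, setting $\widetilde{p}=\min\{2,p\}=\widetilde{p_1}$ and taking $L^{p_2}=L^p$, the convergence assumption
\[
\sum_{[\xi]\in\Gh}d_{\xi}^{2+\frac{r}{\widetilde{p}}}
\n{\n{(\sigma_A(x,\xi))^t}_{op(\ell^{\infty},\ell^{\infty})}}_{L^{p}(G)}^r<\infty
\]
is exactly condition imposed in Theorem \ref{main12}. Hence that theorem applies verbatim and yields that $A:L^p(G)\to L^p(G)$ is $r$-nuclear, with $n_r(A)<\infty$.

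Next I would invoke the fact, recalled in the introduction and in Section \ref{SEC:Prelim}, that the spaces $L^p(\mu)$ enjoy the approximation property for all $1\leq p\leq\infty$. This is the ingredient that makes the trace and the eigenvalue counting well-defined, and it is what allows Theorem \ref{THM:evs} to be applied to $E=L^p(G)$. Applying that theorem to the $r$-nuclear operator $A$ gives
\[
\sum_{n=1}^{\infty}\abs{\lambda_n(A)}^{s}\leq n_r^s(A)<\infty,\qquad \frac1s=\frac1r-\half,
\]
and by the Remark following Theorem \ref{THM:evs} the relation $\frac1s=\frac1r-\half$ is equivalent to $s=\frac{2r}{2-r}$. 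Substituting this value of $s$ produces the claimed convergence $\sum_{n=1}^{\infty}\abs{\lambda_n(A)}^{\frac{2r}{2-r}}<\infty$, completing the argument.

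There is no genuine analytic difficulty here, since the corollary is a packaging of prior results; the only points requiring care are bookkeeping ones. Specifically, one must confirm that the index $\widetilde{p}$ in the corollary matches $\widetilde{p_1}$ of Theorem \ref{main12} under the identification $p_1=p$, and that the exponent $\frac{2r}{2-r}$ is the correct solution of $\frac1s=\frac1r-\half$ for the given range $0<r\leq1$ (so that $s\in(0,2]$, as noted in the Remark). I would therefore devote a sentence to each of these two verifications and otherwise simply cite Theorem \ref{main12} and Theorem \ref{THM:evs}.
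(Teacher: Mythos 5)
Your proposal is correct and matches the paper's own argument exactly: the paper derives Corollary \ref{main13ac} precisely by specialising Theorem \ref{main12} to $p_1=p_2=p$ (so that $\widetilde{p_1}=\widetilde{p}$) and then applying Theorem \ref{THM:evs} on $E=L^p(G)$, which has the approximation property, with $s=\frac{2r}{2-r}$. Nothing is missing.
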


We now derive another consequence relating the trace formulae with matrix-valued symbols. As we have already explained in the introduction, every nuclear operator defined from a Banach space $E$ into $E$ admits a trace provided that $E$ satisfies the approximation property, which is the case here dealing with $L^p$ spaces. In the next proposition we show that, when $p=p_1=p_2$ the sufficient condition in Theorem \ref{main12} ensures the existence of a formula for the trace in terms of the matrix-valued symbol. 
\begin{thm}\label{lem20} 
Let $G$ be a compact Lie group and $0<r\leq 1$. Let $1\leq p <\infty$ and $\widetilde{p}=\min\{2,p\}.$  
Let $A:L^{p}(G)\to L^{p}(G)$ be a linear
continuous operator with matrix-valued symbol $\sigma_A(x,\xi)$
such that
$$\sum_{[\xi]\in\Gh}d_{\xi}^{2 +\frac{r}{\widetilde{p}}}\|\|(\sigma_A(x,\xi))^t\|_{op(\ell^{\infty},\ell^{\infty})}\|_{L^{p}(G)}^r<\infty.$$
Then the operator $A:L^{p}(G)\rightarrow L^{p}(G)$ is $r$-nuclear and its trace is given by 
\beq 
\label{trace}\Tr A=\int_{G}\sum\limits_{[\xi]\in\widehat{G} }d_{\xi}\Tr(\sigma_A(x,\xi))d\mu(x) .
\eq
\end{thm}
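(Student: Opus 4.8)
The plan is to show that the operator $A$ is $r$-nuclear (which is already guaranteed by Theorem \ref{main12} under the stated hypothesis, since this is the same summability condition), and then to compute its trace by exploiting the explicit tensor-product representation of the kernel constructed in the proof of Theorem \ref{main12}. First I would recall that $L^p(G)$ has the approximation property, so the trace of a nuclear operator is well-defined and independent of the representation; in particular I may compute it using the specific decomposition coming from the global quantization. From the proof of Theorem \ref{main12}, the kernel has the form
\[
k(x,y)=\sum_{[\xi]\in\Gh} d_\xi \sum_{i,j=1}^{d_\xi} g_{\xi,ij}(x)\,h_{\xi,ij}(y),
\]
where $g_{\xi,ij}(x)=d_\xi(\xi(x)\sigma(x,\xi))_{ij}$ and $h_{\xi,ij}(y)=\overline{\xi(y)}_{ij}$, and this is exactly the tensor representation of Theorem \ref{ch2} (with the $r$-nuclearity controlled by Remark \ref{remrn}).

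The key step is then to apply the trace formula $\Tr A=\sum x_n'(y_n)$, which for the kernel representation above amounts to integrating the kernel over the diagonal,
\[
\Tr A=\int_G k(x,x)\,d\mu(x)=\int_G \sum_{[\xi]\in\Gh} d_\xi \sum_{i,j=1}^{d_\xi} g_{\xi,ij}(x)\,h_{\xi,ij}(x)\,d\mu(x).
\]
To identify the inner sum, I would write out $\sum_{i,j}(\xi(x)\sigma(x,\xi))_{ij}\overline{\xi(x)}_{ij}$ and recognise it, using $\overline{\xi(x)}_{ij}=(\xi(x)^*)_{ji}$, as the matrix trace $\Tr(\xi(x)\sigma(x,\xi)\xi(x)^*)$. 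By the cyclicity of the trace and the unitarity $\xi(x)^*\xi(x)=I$, this collapses to $\Tr(\sigma(x,\xi))$. Summing back in the factor $d_\xi$ and integrating over $G$ yields precisely the claimed formula \eqref{trace}.

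The main obstacle is justifying that the trace genuinely equals the integral of the kernel over the diagonal, i.e.\ that the operator-theoretic trace $\sum_n x_n'(y_n)$ coincides with $\int_G k(x,x)\,d\mu(x)$ for this representation. This requires that the series defining $k(x,y)$ converge well enough (absolutely, after summing the matrix entries) that one may interchange the summation over $[\xi]$, $i$, $j$ with the integration over the diagonal. Here I would invoke the summability hypothesis: the estimate $|(\xi(x)\sigma(x,\xi))_{ij}|\leq \|(\sigma(x,\xi))^t\|_{op(\ell^\infty,\ell^\infty)}$ from \eqref{toper}, together with $|\overline{\xi(x)}_{ij}|\leq 1$ and the bound on $\sum_{[\xi]} d_\xi^{2+r/\widetilde p}\|\,\|(\sigma_A(x,\xi))^t\|_{op(\ell^\infty,\ell^\infty)}\,\|_{L^p}^r$, to dominate the diagonal series in $L^1(G)$ (noting that each $[\xi]$ contributes a factor $d_\xi^2$ from the $i,j$ summation times $d_\xi$ from the quantization). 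This absolute convergence legitimises Fubini and the identification of the operator trace with the spectral object, after which the algebraic simplification via unitarity completes the argument.
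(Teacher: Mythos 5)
Your overall strategy coincides with the paper's: obtain the $r$-nuclearity from Theorem \ref{main12}, use the approximation property of $L^p(G)$ to compute the trace from the tensor decomposition $g_{\xi,ij}\otimes h_{\xi,ij}$ of the kernel, justify absolute convergence of the diagonal series, and then collapse $\Tr(\xi(x)\sigma(x,\xi)\xi(x)^*)$ to $\Tr(\sigma(x,\xi))$ by cyclicity of the matrix trace and unitarity of $\xi(x)$. The algebraic part is fine.

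The gap is in the domination step, which you correctly single out as the main obstacle but then carry out with a bound that is too crude. Using $|h_{\xi,ij}(x)|=|\overline{\xi(x)}_{ij}|\leq 1$ pointwise together with $|g_{\xi,ij}(x)|\leq d_\xi\|(\sigma(x,\xi))^t\|_{op(\ell^{\infty},\ell^{\infty})}$ and summing over the $d_\xi^2$ pairs $(i,j)$ produces the majorant $\sum_{[\xi]}d_\xi^{3}\int_G\|(\sigma_A(x,\xi))^t\|_{op(\ell^{\infty},\ell^{\infty})}\,d\mu(x)$. But the hypothesis (after the reduction from exponent $r$ to exponent $1$, which you also need to make explicit: $\ell^r\subset\ell^1$ for $r\leq 1$ applied to $a_\xi=d_\xi^{2/r+1/\widetilde{p}}\|\cdot\|_{L^p}$, since the diagonal series involves first powers of the norms while the hypothesis involves $r$-th powers) only controls $\sum_{[\xi]}d_\xi^{2+1/\widetilde{p}}\|\,\|(\sigma_A(x,\xi))^t\|_{op(\ell^{\infty},\ell^{\infty})}\|_{L^{p}(G)}$, and $2+1/\widetilde{p}<3$ as soon as $p>1$. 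Since $d_\xi$ is unbounded on every noncommutative $G$, your majorant requires a strictly stronger assumption than the one in the statement, so the interchange is not justified as written. The repair is exactly the paper's argument: estimate on the diagonal by H\"older, $\int_G|g_{\xi,ij}(x)h_{\xi,ij}(x)|\,d\mu(x)\leq\|g_{\xi,ij}\|_{L^p(G)}\|h_{\xi,ij}\|_{L^q(G)}$, and invoke Lemma \ref{LEM:xiests} to get $\|\overline{\xi}_{ij}\|_{L^q(G)}\leq d_\xi^{-1/\widetilde{q}}$ with $\frac{1}{\widetilde{p}}+\frac{1}{\widetilde{q}}=1$; the extra factor $d_\xi^{-1/\widetilde{q}}$ turns $d_\xi^{3}$ into $d_\xi^{3-1/\widetilde{q}}=d_\xi^{2+1/\widetilde{p}}$, which matches the hypothesis exactly.
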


\begin{proof}  The $r$-nuclearity is a consequence of Theorem \ref{main12}
and we adopt the notation of the proof of Theorem \ref{main12}, and denote $\sigma=\sigma_A$.
 Concerning the trace formula, in sake of simplicity we will just consider $r=1$ the general case follows from inclusion. As we have seen in the proof of Theorem \ref{main12}, the formula 
\[
k(x,y)=\sum\limits_{[\xi]\in\widehat{G} }d_{\xi} \Tr(\xi(x)\sigma(x,\xi)\xi(y)^* )
\]
represents the kernel of $A$. Moreover, it is well defined on the diagonal: in fact for the terms of 
the decomposition of the kernel
 \[g_{\xi,ij}(x)=d_{\xi}(\xi(x)\sigma(x,\xi))_{ij},\,h_{\xi,ij}(y)=(\xi(y)^*)_{ji}=\overline{\xi(y)}_{ij},\]
 by H\"older's inequality we have on the diagonal
\[\int_{G}|g_{\xi,ij}(x)| |h_{\xi,ij}(x)|d\mu (x)\leq  \|g_{\xi,ij}(\cdot)\|_{L^p(G)} \|h_{\xi,ij}(\cdot)\|_{L^q(G)}.\]
Hence, since $p=p_1=p_2$ we have
\begin{align*} 
\int_{G}\sum\limits_{[\xi]\in\widehat{G} }d_{\xi}|\Tr(\sigma(x,\xi))|d\mu(x)\leq & \sum\limits_{\xi, ij}\|g_{\xi,ij}(\cdot)\|_{L^{p}(G)}\|h_{\xi,ij}(\cdot)\|_{L^{q}(G)}\\
\leq & \sum\limits_{\xi}d_{\xi}^{2+\frac{1}{\widetilde{p}}}\|\|(\sigma(x,\xi))^t\|_{op(\ell^{\infty},\ell^{\infty})}\|_{L^{p}(G)}\\
<& \,\infty .
\end{align*}
Therefore,
\begin{align*} \Tr A=& \int_{G}k(x,x)d\mu(x)\\
=& \int_{G}\sum\limits_{[\xi]\in\widehat{G} }d_{\xi} \Tr(\xi(x)\sigma(x,\xi)\xi(x)^* )d\mu(x)\\
=&\int_{G}\sum\limits_{[\xi]\in\widehat{G} }d_{\xi} \Tr(\sigma(x,\xi)\xi(x)^*\xi(x) )d\mu(x)\\
=&\int_{G}\sum\limits_{[\xi]\in\widehat{G} }d_{\xi} \Tr(\sigma(x,\xi))d\mu(x).\\
\end{align*}
We have employed the tracial property $\Tr(AB)=\Tr(BA)$ and the fact that $\xi(x)$ is unitary for every $x$. 
\end{proof}
 In relation with the Lidskii formula, from Theorem \ref{lem20} and Grothendieck's theorem we have:
\begin{cor}\label{lem20al} 
Let $G$ be a compact Lie group and $0<r\leq \twothird$. Let $1\leq p <\infty$ and $\widetilde{p}=\min\{2,p\}.$  
Let $A:L^{p}(G)\to L^{p}(G)$ be a linear
continuous operator with matrix-valued symbol $\sigma_A(x,\xi)$
such that
$$\sum_{[\xi]\in\Gh}d_{\xi}^{2 +\frac{r}{\widetilde{p}}}\|\|(\sigma_A(x,\xi))^t\|_{op(\ell^{\infty},\ell^{\infty})}\|_{L^{p}(G)}^r<\infty.$$
Then the operator $A:L^{p}(G)\rightarrow L^{p}(G)$ is $r$-nuclear and its trace satisifies 
\beq 
\label{trace19}\Tr A=\int_{G}\sum\limits_{[\xi]\in\widehat{G} }d_{\xi}\Tr(\sigma_A(x,\xi))d\mu(x)=\sum\limits_{n=1}^{\infty}\lambda_n(A),
\eq
with multiplicities taken into account. 
\end{cor}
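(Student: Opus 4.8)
The plan is to obtain this corollary by composing two ingredients already established: the symbolic trace formula of Theorem \ref{lem20} and Grothendieck's Banach-space Lidskii theorem, valid for indices $0<r\leq\twothird$. The summability hypothesis here is \emph{verbatim} that of Theorem \ref{lem20}, so no new estimate is required; the genuinely new content over Theorem \ref{lem20} is only the second equality in \eqref{trace19}, namely that the symbolic trace coincides with the spectral trace $\sum_n\lambda_n(A)$.

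First I would note that $0<r\leq\twothird$ forces $0<r\leq 1$, so Theorem \ref{main12} applies and $A:L^p(G)\to L^p(G)$ is $r$-nuclear; Theorem \ref{lem20}, whose hypothesis is identical, then furnishes the first equality
\[\Tr A=\int_G\sum_{[\xi]\in\Gh}d_\xi\Tr(\sigma_A(x,\xi))\,d\mu(x).\]
Because $L^p(G)$ has the approximation property (recalled in Section \ref{SEC:Prelim}), the operator trace $\Tr A$ is well defined and independent of the nuclear representation used to compute it; in particular the decomposition $g_{\xi,ij}\otimes h_{\xi,ij}$ built in the proof of Theorem \ref{main12} is an admissible representation, which is precisely what gives the first equality its meaning.

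Next I would invoke Grothendieck's theorem (cf. \cite{gro:me}, and the Grothendieck-Lidskii formulation in \cite{Reinov}): on a Banach space enjoying the approximation property, every $r$-nuclear operator with $0<r\leq\twothird$ has operator trace equal to $\sum_{n=1}^\infty\lambda_n(A)$, the sum of its eigenvalues counted with algebraic multiplicities. Applied with $E=L^p(G)$, this yields the second equality in \eqref{trace19}. As a consistency check, Theorem \ref{THM:evs} gives for $r\leq\twothird$ the exponent $s=\tfrac{2r}{2-r}\leq 1$, so that $\sum_n|\lambda_n(A)|<\infty$ and the right-hand side of \eqref{trace19} converges absolutely; this is one way to see why $\twothird$ is the correct threshold.

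The difficulty here is not analytic but one of correct citation and bookkeeping: the delicate work — the boundedness of the kernel on the diagonal and the convergence of the symbolic series — has already been carried out in Theorems \ref{main12} and \ref{lem20}. The main point to verify is that the trace computed symbolically in Theorem \ref{lem20} is exactly the operator (nuclear) trace to which Grothendieck's theorem applies, and that $r=\twothird$ is precisely the index ensuring the Lidskii identity in the Banach-space setting, in contrast with the Hilbert-space case $r=1$.
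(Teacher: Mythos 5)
Your proposal is correct and follows essentially the same route as the paper: the authors also obtain Corollary \ref{lem20al} directly by combining Theorem \ref{lem20} (which gives the $r$-nuclearity and the symbolic trace formula under the identical hypothesis) with Grothendieck's theorem that $r$-nuclear operators with $0<r\leq\twothird$ on a Banach space with the approximation property satisfy the Lidskii identity. Your added remarks on the approximation property and the absolute convergence of $\sum_n|\lambda_n(A)|$ via Theorem \ref{THM:evs} are consistent with, though not spelled out in, the paper's one-line derivation.
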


\begin{rem} We note that not for every kernel it
is convenient to calculate the trace integrating along the 
diagonal due to the degeneracy of the measure on it. When the kernel is representable 
by an expansion of the kind appearing in Theorem \ref{ch2} one is allowed 
to proceed in such a way. For a general kernel the integration along the diagonal 
should be calculated involving an averaging processes, see e.g. \cite{del:tracetop}.
\end{rem}

Very recently it has been proved (cf. \cite{Reinov}) that if $\frac{1}{r}=1+|\half -\frac{1}{p}|$, 
the Lidskii formula holds for  $r$-nuclear operators on $L^p(\nu)$-spaces. 
The importance of this result for us is that it allows to move $r$ along the interval $[\twothird, 1] $ 
keeping the validity of Lidskii's formula for  suitable values of $p$. If $r\in(\twothird, 1)$ 
there exist two corresponding values of $p$ solving the equation $\frac{1}{r}=1+|\half -\frac{1}{p}|$ 
the first one with $p<2$ and the other one with $p>2$. As a consequence of this result and 
 Theorem \ref{lem20} we obtain an extension of Corollary \ref{lem20al} allowing now a larger
 range of $r$:

 \begin{cor}\label{main13ab} 
 Let $G$ be compact Lie group. 
 Let $1\leq p<\infty$ and let us denote $\widetilde{p}=\min\{2,p\}.$
 Let $0<r\leq 1$ be such that $\frac{1}{r}=1+|\half -\frac{1}{p}|$. 
 If   
$$\sum_{[\xi]\in\Gh}d_{\xi}^{2+\frac{r}{\widetilde{p}}}\|\|(\sigma_A(x,\xi))^t\|_{op(\ell^{\infty},\ell^{\infty})}\|_{L^{p}(G)}^r<\infty,$$
 then $A$ is $r$-nuclear on $L^{p}(G)$ and we have
\[\Tr A=\int_{G}\sum\limits_{[\xi]\in\widehat{G} }d_{\xi}\Tr(\sigma_A(x,\xi))d\mu(x)=\sum\limits_{n=1}^{\infty}\lambda_n(A),
\]
with multiplicities taken into account. 
\end{cor}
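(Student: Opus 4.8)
The plan is to obtain the statement as a direct combination of Theorem \ref{lem20} with the Grothendieck--Lidskii formula on $L^p$-spaces established by Reinov and Laif \cite{Reinov}. First I would observe that the summability hypothesis imposed here is literally the hypothesis of Theorem \ref{lem20}: the relation $\frac1r=1+\abs{\half-\frp}$ forces $\frac1r\geq 1$, and since $\abs{\half-\frp}\leq\half$ for all $1\leq p<\infty$ it also gives $\frac1r\leq\frac32$, so that $r$ automatically lies in $[\twothird,1]\subset(0,1]$ and Theorem \ref{lem20} is applicable. Invoking it, I obtain at once that $A:L^p(G)\to L^p(G)$ is $r$-nuclear and that its operator trace is represented by the symbol integral
\[\Tr A=\int_G\sum_{[\xi]\in\Gh}d_\xi\Tr(\sigma_A(x,\xi))\,d\mu(x).\]

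It then remains only to identify this operator trace with the spectral sum $\sum_n\lambda_n(A)$. For this I would appeal to the approximation property of $L^p(G)$, valid since, with $\mu$ the normalised Haar measure, $L^p(G)=L^p(G,\mathcal M,\mu)$ is an ordinary $L^p(\mu)$-space; this guarantees that the operator trace of an $r$-nuclear operator is well defined and independent of the representation chosen in \eqref{EQ:T-reps}. The key external input is then the theorem of \cite{Reinov}, asserting that precisely when $\frac1r=1+\abs{\half-\frp}$ the Lidskii trace formula holds for every $r$-nuclear operator on an $L^p(\mu)$-space. Applying it to the $r$-nuclear operator $A$ just produced yields $\Tr A=\sum_{n=1}^\infty\lambda_n(A)$ with multiplicities counted, and chaining this with the symbol integral above gives the asserted two-fold identity.

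There is no serious analytic obstacle in this argument, since both of its ingredients are already available: the substantive work—producing the tensor representation of the kernel of $A$ in the form required by Theorem \ref{ch2} and computing the diagonal integral that yields the symbol trace formula—was carried out in Theorems \ref{main12} and \ref{lem20}, while the passage from the operator trace to the eigenvalue sum is exactly the content of the cited Grothendieck--Lidskii result. The only point that genuinely has to be checked is the compatibility of the index conditions, namely that the single relation $\frac1r=1+\abs{\half-\frp}$ simultaneously places $r$ in the range $(0,1]$ demanded by Theorem \ref{lem20} and in the critical range demanded by \cite{Reinov}; as noted above this is automatic. It is worth recording that the corollary genuinely enlarges Corollary \ref{lem20al}: there the Lidskii identity was available only for $r\leq\twothird$ via Grothendieck's classical index, whereas here, for $p\neq 2$, one is permitted to take the strictly larger value $r=\left(1+\abs{\half-\frp}\right)^{-1}\in(\twothird,1)$.
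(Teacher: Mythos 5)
Your proposal is correct and follows exactly the paper's own route: the corollary is stated there as an immediate consequence of Theorem \ref{lem20} (which supplies the $r$-nuclearity and the symbol trace formula) combined with the Grothendieck--Lidskii theorem of Reinov and Laif \cite{Reinov} for the critical index $\frac1r=1+\left|\half-\frac1p\right|$. Your additional check that this relation automatically places $r$ in $[\twothird,1]$ is a sensible verification, consistent with the paper's remark that the index moves along that interval as $p$ varies.
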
 

\subsection{Heat kernels}
\label{SEC:heat}
 
We shall now establish some applications, in particular to the heat kernels on compact Lie groups. 
The heat kernel constructions, and the subsequent Poisson kernel constructions, are instrumental
in the advances in the Littlewood-Paley theory on compact Lie groups, see e.g.
\cite{Stein:BOOK-topics-Littlewood-Paley}. However, our approach is more straightforward,
making use of the symbol of the heat kernel.
Indeed, taking into account that $\sigma_{e^{-t\lap}}(x,\xi)=e^{-t|\xi|^2}I_{d_{\xi}}$,  
where $|\xi|^2=\lambda_{[\xi]}^2$ with $\lambda_{[\xi]}$ as in \eqref{EQ:Lap-lambda},
we have
\begin{align*} 
e^{-t\lap}f(x)=&\sum\limits_{[\xi]\in \widehat{G}}d_{\xi} \Tr(\xi(x)\sigma_{e^{-t\lap}}(x,\xi)\widehat{f}(\xi))\\
=&\sum\limits_{[\xi]\in \widehat{G}}d_{\xi}e^{-t\eigen} \Tr(\xi(x)\widehat{f}(\xi)).
\end{align*}
We can now derive the nuclearity of the heat kernel on $L^p$-spaces.

\begin{thm}\label{main123} Let $G$ be compact Lie group. Then the heat operator
 $e^{-t\lap}:L^{p_1}(G)\rightarrow L^{p_2}(G)$ is nuclear for every $t>0$ and all
 $1\leq p_1,p_2<\infty$.
Moreover, if $0<r\leq 1$,
then 
 $e^{-t\lap}:L^{p}(G)\rightarrow L^{p}(G)$ is $r$-nuclear for every $t>0$ and $1\leq p<\infty$. 
 In particular, on each $L^p(G)$, due to the $1$-nuclearity we have the trace formula
 $$
 \Tr e^{-t\lap}=\sum_{[\xi]\in\Gh} d_\xi^2 e^{-t\lambda_{[\xi]}^2}.
 $$

\end{thm}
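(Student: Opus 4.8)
The plan is to verify the three claims by applying the invariant-symbol criteria already established, since the heat operator $e^{-t\lap}$ is left-invariant with the explicit diagonal symbol $\sigma_{e^{-t\lap}}(\xi)=e^{-t\lambda_{[\xi]}^2}I_{d_\xi}$. First I would address the nuclearity from $L^{p_1}(G)$ to $L^{p_2}(G)$. Because the symbol is a scalar multiple of the identity, it is in particular diagonal, so Theorem \ref{main126} applies and I only need to check that
\[
\sum_{[\xi]\in\Gh}d_{\xi}^{1+(\frac{1}{\widetilde{p_1}}-\frac{1}{\widetilde{p_2}})r}\|\sigma_{e^{-t\lap}}(\xi)\|_{S_r}^r<\infty
\]
with $r=1$. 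Here $\|\sigma_{e^{-t\lap}}(\xi)\|_{S_1}=\Tr(|e^{-t\lambda_{[\xi]}^2}I_{d_\xi}|)=d_\xi e^{-t\lambda_{[\xi]}^2}$, so the series becomes $\sum_{[\xi]}d_\xi^{2+(\frac{1}{\widetilde{p_1}}-\frac{1}{\widetilde{p_2}})}e^{-t\lambda_{[\xi]}^2}$ (absorbing the extra $d_\xi$ from the trace). The exponent on $d_\xi$ is some fixed power, so the key point is that the Gaussian factor $e^{-t\lambda_{[\xi]}^2}$ decays faster than any polynomial in $\jp{\xi}$ grows.

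The main technical step, and really the only obstacle, is to convert this exponential decay into convergence against the polynomial weight. For this I would invoke Weyl's law: on a compact Lie group of dimension $n$ the counting of eigenvalues gives $d_\xi \lesssim \jp{\xi}^{n/2}$ crudely, and more to the point Lemma \ref{lem2} guarantees $\sum_{[\xi]}d_\xi^2\jp{\xi}^{-s}<\infty$ for every $s>n$. Given any power $d_\xi^{M}$ appearing in the series, I would write $d_\xi^{M}e^{-t\lambda_{[\xi]}^2}=\big(d_\xi^{M}e^{-\frac{t}{2}\lambda_{[\xi]}^2}\big)e^{-\frac{t}{2}\lambda_{[\xi]}^2}$; since $e^{-\frac{t}{2}\lambda_{[\xi]}^2}$ beats any polynomial, for large $\jp{\xi}$ we have $d_\xi^{M}e^{-\frac{t}{2}\lambda_{[\xi]}^2}\leq C\jp{\xi}^{-s}d_\xi^{2}$ for a chosen $s>n$, reducing the tail of the sum to the convergent series in Lemma \ref{lem2}. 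The finitely many remaining terms are harmless. This handles nuclearity for the off-diagonal case $p_1\neq p_2$ at index $r=1$, and taking $p_1=p_2=p$ with general $0<r\le 1$ works identically, again via Theorem \ref{main126}, since the same Gaussian-versus-polynomial argument absorbs the modified exponent $1+(\frac{1}{\widetilde p}-\frac{1}{\widetilde p})r=1$ and the factor $d_\xi^r$ from $\|\sigma\|_{S_r}^r=d_\xi e^{-tr\lambda_{[\xi]}^2}$.

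Finally, for the trace formula I would use the $1$-nuclearity just established together with Theorem \ref{lem20}, which gives
\[
\Tr e^{-t\lap}=\int_G\sum_{[\xi]\in\Gh}d_\xi\Tr(\sigma_{e^{-t\lap}}(x,\xi))\,d\mu(x).
\]
Since $\sigma_{e^{-t\lap}}(x,\xi)=e^{-t\lambda_{[\xi]}^2}I_{d_\xi}$ is independent of $x$ and $\Tr(I_{d_\xi})=d_\xi$, the inner trace is $d_\xi e^{-t\lambda_{[\xi]}^2}$, so the integrand equals $\sum_{[\xi]}d_\xi^2 e^{-t\lambda_{[\xi]}^2}$; integrating over $G$ against the normalised Haar measure leaves this unchanged, yielding the stated formula. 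I do not expect any difficulty here beyond confirming that Theorem \ref{lem20} is applicable, which follows since the corresponding symbol-sum hypothesis is exactly the $r=1$ convergence verified above.
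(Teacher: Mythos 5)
Your proposal is correct and follows essentially the same route as the paper: the paper re-runs the kernel decomposition of Theorems \ref{ch2}/\ref{main12} directly for the explicit symbol $e^{-t\lambda_{[\xi]}^2}I_{d_\xi}$ and justifies $\sum_{[\xi]}d_\xi^{M}e^{-t\lambda_{[\xi]}^2}<\infty$ by appeal to Weyl formulae, while you instead invoke the already-established diagonal criterion (Theorem \ref{main126}) and derive the same convergence from Lemma \ref{lem2} plus the Gaussian-beats-polynomial observation; the substance is identical. The only nit is your closing remark that the hypothesis of Theorem \ref{lem20} is "exactly" the sum you verified --- it is actually $\sum_{[\xi]}d_\xi^{2+\frac{r}{\widetilde{p}}}e^{-tr\lambda_{[\xi]}^2}$ with the $op(\ell^\infty,\ell^\infty)$-norm of the symbol rather than the Schatten norm, but it converges by the identical argument, so nothing is lost.
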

\begin{proof} The kernel of $e^{-t\lap}$ is given by 
\[k_t(x,y)=\sum\limits_{[\xi]\in\widehat{G} }d_{\xi}e^{-t\eigen} \Tr(\xi(x)\xi(y)^* ),\]
with
\[ \Tr(\xi(x)\xi(y)^* )=\sum\limits_{i,j=1}^{d_{\xi}} \xi(x)_{ij}\overline{\xi(y)}_{ij}.\]
We set 
\[g_{\xi,ij}(x)=d_{\xi}e^{-t\eigen}\xi(x)_{ij},\,h_{\xi,ij}(y)=(\xi(y)^*)_{ji}=\overline{\xi(y)}_{ij}.\]
As before we shall consider $q_1$ such that $\frac{1}{p_1}+\frac{1}{q_1}=1$ and we 
denote $\widetilde{q_1}=\max\{2,q_1\}$. Then by Lemma \ref{LEM:xiests} we have
\[\|\overline{\xi}_{ij}\|_{L^{q_1}(G)}= d_{\xi}^{-\frac{1}{\widetilde{q_1}}}.\]
 On the other hand 
\begin{align*}
\|g_{\xi,ij}\|_{L^{p_2}(G)}=&\|d_{\xi}e^{-t\eigen}\xi_{ij}\|_{L^{p_2}(G)}\\
\leq & \|d_{\xi}e^{-t\eigen}\|\xi\|_{op}\|_{L^{p_2}(G)}\\
\leq & {d_{\xi}} e^{-t\eigen}.
\end{align*}
Therefore,
$$
\sum\limits_{\xi, ij}\|g_{\xi,ij}(\cdot)\|_{L^{p_2}(G)}\|h_{\xi,ij}(\cdot)\|_{L^{q_1}(G)} 
\leq \sum\limits_{\xi} d_{\xi}^2{d_{\xi}}^{\frac{1}{\widetilde{p_1}}}e^{-t\eigen} 
< \,\infty ,
$$
the last convergence following, for example, from any of the Weyl formulae, see,
for example \cite{dr:gevrey}.

The $r$-nuclearity follows in a similar way.
The trace formula follows immediately from Lemma \ref{lem20} and fact that the Haar measure on $G$
is normalised:
$$
\Tr e^{-t\lap}=\int_G \sum_{[\xi]\in\Gh} d_\xi \Tr( e^{-t\lambda_{[\xi]}^2} I_{d_\xi})=
\sum_{[\xi]\in\Gh} d_\xi^2 e^{-t\lambda_{[\xi]}^2}.
$$
\end{proof}

\begin{rem} The Lidskii formula can be used to deduce lower bounds on the number of eigenvalues:
Let $E$ be a Banach space enjoying the approximation property. 
If $T:E\rightarrow E$ is a $\twothird$-nuclear operator which possesses at least one eigenvalue and 
if $|\lambda_k(T)|\leq M$ for all $k$, then 
\[\frac{|\Tr(T)|}{M}\leq N ,  \]
 where $N$ is the number of eigenvalues of $T$. Indeed, applying the Lidskii formula
   \[\Tr(T)=\sum\limits_{k=1}^{N}\lambda_k(T),\]
  we can estimate
     \[|\Tr(T)|= |\sum\limits_{k=1}^{N}\lambda_k(T)|\leq \sum\limits_{k=1}^{N}|\lambda_k(T)|\leq MN.\]  
As a consequence of this observation, taking into account the trace formula in \cite{del:tracetop} 
we obtain the following estimate for integral operates. The symbol $\,\widetilde{}\, $ will denote the 
averaging process for kernels described in \cite{del:tracetop}. 
Let $\mu$ be a Borel measure on a second countable topological space and let
$T:L^p(\mu)\rightarrow L^p(\mu)$ be a 
 $\twothird$-nuclear operator with kernel $K(x,y)$. If $T$ possesses at least one eigenvalue and 
 if $|\lambda_k(T)|\leq M$ for all $k$, then
 \[\frac{|\int\limits_{\Omega}{\tilde{K}}(x,x)d\mu(x)|}{M}\leq N ,  \]
 where $N$ is the number of eigenvalues of $T$.
  The last inequality means that the better one can estimate the size of the trace the better lower bound 
  one gets for the
 number of the eigenvalues.
\end{rem}  
 



\end{document}